\newtheorem{theorem}{Theorem}[section]
\newtheorem{corollary}[theorem]{Corollary}
\newtheorem{lemma}[theorem]{Lemma}
\newtheorem{problem}[theorem]{Problem}
\newtheorem{conjecture}[theorem]{Conjecture}
\theoremstyle{definition}
\newtheorem{definition}{Definition}[section]
\newcommand{\oururl}{\url{http://lidicky.name/pub/rt/}}
\newcommand{\bu}{\mathbf{u}}
\newcommand{\bx}{\mathbf{x}}
\newcommand{\calF}{\mathcal{F}}
\tikzset{
vtx/.style={inner sep=1.7pt, outer sep=0pt, circle, fill=black,draw=black}, 
}
\tikzset{
xvtx/.style={inner sep=1.7pt, outer sep=0pt, circle,draw=black}, 
}
\title{Weighted Tur\'an theorems with applications to Ramsey-Tur\'an type of problems}
\author{
J\'ozsef Balogh\thanks{
Department of Mathematical Sciences,
University of Illinois,
Urbana, IL, USA.
E-mail: {\tt jobal@illinois.edu}.
Balogh was supported in part by NSF grants DMS-1764123, RTG DMS-1937241 and FRG DMS-2152488, the Arnold O. Beckman Research Award (UIUC Campus Research Board RB 22000), and the Langan Scholar Fund (UIUC).}
\and 
Domagoj Brada\v{c}\thanks{Department of Mathematics, ETH, Z\"urich, Switzerland. Email:
\href{mailto:domagoj.bradac@math.ethz.ch} {\nolinkurl{domagoj.bradac@math.ethz.ch}}.
Research is supported in part by SNSF grant 200021\_196965.}{}
\and
Bernard Lidick\'y\thanks{
Department of Mathematics, Iowa State University, Ames, IA, USA. E-mail: {\tt lidicky@iastate.edu}. Research of this author is supported in part by NSF FRG grant DMS-2152490 and Scott Hanna fellowship.
}
}
\date{}
\begin{document}

\maketitle

\begin{abstract}
We study extensions  of  Tur\'an Theorem  in edge-weighted settings. A particular case of interest is when constraints on the weight of an  edge come from the order of the largest clique containing it.
These problems are motivated by Ramsey-Tur\'an type problems. Some of our proofs are based on the method of graph Lagrangians, while the other proofs use flag algebras. Using these results, we prove several new upper bounds on the  Ramsey-Tur\'an density of cliques. Other applications of our results  are  in a recent paper of Balogh, Chen, McCourt and Murley. 
\end{abstract}
\emph{
Keywords: Ramsey-Tur\'an, weighted Tur\'an, weighted graphs, flag algebras.
}

\section{Introduction}
\subsection{Ramsey-Tur\'{a}n theory}\label{sec-intro-ramsey-turan}
The maximum number of edges in a $K_{k}$-free graph on $n$ vertices is achieved by the balanced complete $(k-1)$-partite graph, which is  known as the {\it Tur\'an Graph} $T(n,k-1)$. Observe that $T(n,k-1)$ has an independent set of size at least $n/(k-1)$.
On the other hand, Ramsey's theorem asserts that for every $k$ and $\ell$ there exists a finite number $R(k,\ell)$ such that every graph with $R(k,\ell)$ vertices contains either a clique of order $k$ or an independent set of order $\ell$.
The combination of these two theorems motivated S\'os~\cite{SosRT} to introduce the so-called   Ramsey-Tur\'an theory. The typical problem asks to determine the maximum number of edges in a $K_{k+1}$-free $n$-vertex graph with independence number at most $\ell$. By Ramsey's theorem, such a graph exists only if $n < R(k+1,\ell+1)$. 
The \emph{inverse Ramsey number}  $Q(k,n)=\ell$ is the smallest possible independence number in a $K_k$-free graph on $n$ vertices, i.e., $\ell$ is the maximum integer such that $R(k,\ell)\le n$.
When investigating the asymptotic behavior of the maximum number of edges in  $K_{k+1}$-free $n$-vertex graphs with independence number at most $\ell$, usually $n$ goes to infinity, $k$ is a fixed constant, and $\ell$ depends on $n$ such that $\ell \geq Q(k,n)$.

More generally, we may require that every large subset of vertices spans not only an edge, but a clique of a given size, leading to the following definition. The \emph{$p$-independence number} of a graph $G$ is 
\[
\alpha_p(G) := \max\left\{ |U|: U \subseteq V(G) \text{ and } G[U] \text{ is } K_p\text{-free}\right\}.
\]

Notice that $\alpha_2(G)$ is the  independence number of $G$. The \emph{Ramsey-Tur\'an number}  $RT_p(n,K_q,m)$ is the maximum number of edges in an $n$-vertex $K_q$-free graph with $\alpha_p(G) \leq m$.
The \emph{Ramsey-Tur\'{a}n density} (in case it exists, which is expected) is
\[
\varrho_p(q) := \lim_{\varepsilon \to 0}\lim_{n \to \infty} \frac{RT_p(n,K_q,\varepsilon n)}{\binom{n}{2}}.
\]
These types of problems were first studied by S\'os~\cite{SosRT}, and Erd\H os and S\'os~\cite{ErdosSosRT} who determined the value of $\varrho_2(q)$ for all odd $q$. Since their work, determining Ramsey-Tur\'an densities for various $p$ and $q$ has become a classical problem in extremal combinatorics.

The case when $p=2$ and $q$ is even has proved to be significantly harder. First non-trivial upper bound was proven by Szemer\'edi~\cite{szemeredi} using his celebrated regularity lemma. In terms of lower bounds, a big breakthrough was by Bollob\'{a}s and Erd\H{o}s~\cite{BolobasErdos}, who constructed $n$-vertex $K_4$-free graphs with $\frac{n^2}{8} - o(n^2)$ edges and independence number $o(n).$ Building on these works, the case $p=2$ was fully settled by Erd\H{o}s, Hajnal, Simonovits, 
S\'{o}s and Szemer\'{e}di~\cite{MR1300968}:
\[
\varrho_2(2t+1) = \frac{t-1}{t} \quad\quad \text{\ for all \ } t \geq 1,  \quad \quad \quad \text{\ and\ } 
\quad \quad 
\varrho_2(2t) = \frac{3t-5}{3t-2} \quad\quad \text{ for all\ } t \geq 2.
\]

Furthermore, in~\cite{MR1300968}, the authors conjectured that the asymptotically extremal graphs for $\varrho_p(q)$ exhibit a certain periodic structure.
\begin{conjecture}[Conjecture \cite{MR1300968}]\label{conj:false}
The asymptotically extremal graph $G$ for $\varrho_p(q)$ has the following structure. Let $q = pt + r +2$, where $t \in \mathbb{N}$ and $r \in \mathbb{Z}_p$. Then, there exists a partition $V(G) = V_0 \cup V_1 \cup \cdots \cup V_t$ such that
\begin{itemize}
    \item $e(G[V_i]) = o(n^2)$ for all $0 \leq i \leq t$;
    \item $d_G(V_0,V_1) = \frac{r+1}{p}-o(1)$, and degrees in $G[V_0,V_1]$ differ by $o(n)$;
    \item $d_G(V_i,V_j) = 1-o(1)$ for all pairs $\{i,j\} \neq \{0,1\}$.
\end{itemize}
In particular, 
\[
\varrho_p(q) = \varrho_p^\star(q) := \frac{(t-1)(2p-r-1)+r+1}{t(2p -r -1) + r + 1}.
\]
\end{conjecture}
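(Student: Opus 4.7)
Since the conjecture is of the form $\varrho_p(q) = \varrho_p^\star(q)$, a proof requires matching bounds in both directions. The lower bound $\varrho_p(q) \geq \varrho_p^\star(q)$ is supplied by the construction described in the conjecture itself: a regularized blow-up of $K_{t+1}$ with the prescribed density $(r+1)/p$ between $V_0$ and $V_1$, seeded with Bollob\'as--Erd\H{o}s-type inputs inside each part to keep $\alpha_p = o(n)$. Checking $K_q$-freeness and counting edges recovers the claimed value $\varrho_p^\star(q)$, so the substantive work lies in proving the matching upper bound $\varrho_p(q) \leq \varrho_p^\star(q)$.

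My plan for the upper bound is to reduce the Ramsey-Tur\'an problem to a weighted Tur\'an problem, which is the main theme of the paper. Given an almost-extremal $K_q$-free graph $G$ with $\alpha_p(G) = o(n)$, I would apply Szemer\'edi's regularity lemma to produce a reduced graph $R$ whose edges carry weights equal to the densities of the corresponding regular pairs. A standard cleanup makes these weights either negligible or bounded away from $0$, and the hypothesis $\alpha_p(G) = o(n)$ then forces every edge of $R$ to participate in a clique of $R$ on which the induced weight pattern is rich enough to host a $K_p$ in the original graph $G$. Encoding this structural restriction as a constraint on an edge-weight function on $R$, I would invoke the weighted Tur\'an theorems of the paper (proved either via graph Lagrangians or via flag algebras, depending on $(p,q)$) to bound the total edge weight, and then compare the resulting value with $\varrho_p^\star(q)$.

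The main obstacle is identifying the correct weighted Tur\'an statement whose optimum equals $\varrho_p^\star(q)$: the imposed edge-weight constraint must be implied by $\alpha_p(G) = o(n)$ (for soundness) and must be tight on the conjectured blow-up (for matching the construction). For the Lagrangian route, this amounts to finding a clique-weighted polynomial whose maximum over the standard simplex equals the target density; for flag algebras it amounts to choosing a family of types for which the semidefinite relaxation is sharp. Given the label \texttt{conj:false}, I anticipate that for some $(p,q)$ no such tight statement exists: the weighted Tur\'an optimum should come in strictly below $\varrho_p^\star(q)$, which both refutes Conjecture~\ref{conj:false} in those cases and pinpoints the corrected density. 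I would then revisit the lower-bound construction in those bad cases and look for a replacement block (e.g., a different weighting of the $K_{t+1}$ blow-up, or a non-periodic structure) that matches the new upper bound.
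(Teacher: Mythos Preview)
The statement you are trying to prove is a \emph{conjecture}, not a theorem, and the paper does not contain a proof of it --- indeed, as the paper explicitly notes (and as the label \texttt{conj:false} signals), Liu, Reiher, Sharifzadeh, and Staden~\cite{hong} disproved it by showing $\varrho_{16}(22) = 1/6 > 5/32 = \varrho_{16}^\star(22)$. So there is no proof in the paper to compare against, and any attempt to prove the full conjecture must fail.

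More substantively, your anticipated failure mode has the sign wrong. You predict that the weighted Tur\'an optimum will come in \emph{below} $\varrho_p^\star(q)$, i.e., that the upper bound will be too strong for the construction. In fact the opposite happens: the counterexample in~\cite{hong} is a \emph{construction} that beats $\varrho_p^\star(q)$, so it is the lower bound side that is defective. Relatedly, your claim that the lower bound is routine (``seeded with Bollob\'as--Erd\H{o}s-type inputs inside each part to keep $\alpha_p = o(n)$'') glosses over a genuine difficulty: for general $p$ one needs a $K_{p+1}$-free graph on each part with $\alpha_p = o(n)$, and placing such graphs in the parts while keeping the whole graph $K_q$-free is far from automatic. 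The paper itself flags this in Section~\ref{sec:ramtur}: ``Notice that these are not necessarily realizable \dots\ It is not obvious how to make such a construction.'' The breakthrough constructions of~\cite{hong} use high-dimensional complex spheres precisely because the naive Bollob\'as--Erd\H{o}s product does not suffice, and one of their alternative constructions is what exceeds $\varrho_p^\star(q)$.
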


\begin{figure}
\begin{center}
       \begin{tikzpicture}[scale=2]
       \draw(0.5,-0.3) node[below]{$t=2$};
    \draw[line width=20 pt,color=gray!50!white,opacity=0.5] 
    (0,0) coordinate(a) -- (1,0) coordinate(b)
    (a)  -- (60:1)  coordinate(c)
    (c)--(b)
    ;
    \draw[fill=white]  
    (a) circle(0.3cm) (a) node{$V_0$}
    (b) circle(0.3cm) (b) node{$V_1$}
    (c) circle(0.3cm) (c) node{$V_2$}
    ;
    \draw
    ($(a)!0.5!(b)$) node{$\frac{r+1}{p}$}
    ($(a)!0.5!(c)$) node{$1$}
    ($(c)!0.5!(b)$) node{$1$}
    ;
    \end{tikzpicture}
\hskip 4em
       \begin{tikzpicture}[scale=2]
              \draw(0.5,-0.3) node[below]{$t=3$};
    \draw[line width=20 pt,color=gray!50!white,opacity=0.5] 
    (0,0) coordinate(a) -- (1,0) coordinate(b)
    -- (1,1)  coordinate(c)
    -- (0,1)  coordinate(d) --(a)
    ;
    \draw [line width=20 pt,color=gray!50!white,opacity=0.5] 
    (a)--(c)
    ;
    \draw [line width=20 pt,color=gray!50!white,opacity=0.5] 
    (b)--(d)
    ;
    \draw[fill=white]  
    (a) circle(0.3cm) (a) node{$V_0$}
    (b) circle(0.3cm) (b) node{$V_1$}
    (c) circle(0.3cm) (c) node{$V_2$}
    (d) circle(0.3cm) (d) node{$V_3$}
    ;
    \draw
    ($(a)!0.5!(b)$) node{$\frac{r+1}{p}$}
    ($(b)!0.5!(c)$) node{$1$}
    ($(c)!0.5!(d)$) node{$1$}
    ($(a)!0.5!(d)$) node{$1$}
    ($(a)!0.5!(c)$) node{$1$}
    ;
    \end{tikzpicture}
\end{center}
\caption{Construction from Conjecture~\ref{conj:false} for $t=2$ and $t=3$. Recall $q = pt + r +2$.}\label{fig:conjfalse}
\end{figure}
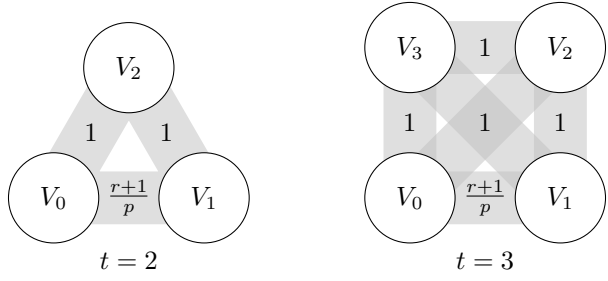

 Non-trivial lower bounds are rare
 for  $p\ge 3$, a remarkable result is due to Balogh and Lenz~\cite{BaloghLenz2} who used a product construction utilizing the so-called Bollob\'{a}s-Erd\H{o}s graph mentioned above. A recent breakthrough was  by Liu, Reiher, Sharifzadeh, and Staden~\cite{hong}, using complex high dimensional spheres. In particular, one of their constructions implies that $\varrho_3(5) = 1/6.$ Furthermore, using a different construction, they disproved Conjecture~\ref{conj:false} by showing $\varrho_{16}(22)=1/6 >5/32=  \varrho_{16}^\star(22)$. More importantly, their result implies that the  structure of the extremal graphs  is much more intricate than previously conjectured.

Our main contribution to Ramsey-Tur\'{a}n theory is proving new upper bounds as given by the following theorem.
\begin{restatable}{theorem}{thmsummaryrestate} \label{thm:summary}
    The following bounds hold:\\
     $$\varrho_4(11) \leq \frac{4}{7},\quad \quad\quad 
     \varrho_5(12) \leq \frac{10}{19},\quad\quad\quad 
    \varrho_6(12) \leq \frac{5}{12},\quad\quad\quad 
     \varrho_6(14) \leq \frac{12}{23}.$$
     This implies  $\varrho_5(12) = \frac{10}{19}.$
\end{restatable}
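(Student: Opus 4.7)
The plan is to deduce each of the four density bounds from a weighted Tur\'an-type statement (the weighted Tur\'an theorems alluded to in the abstract), via the standard Szemer\'edi regularity reduction that underlies Ramsey--Tur\'an arguments. Fix $\varepsilon>0$, let $G$ be an $n$-vertex $K_q$-free graph with $\alpha_p(G)\le\varepsilon n$, and apply the regularity lemma to obtain an equipartition $V_1\cup\cdots\cup V_k$ and a \emph{reduced graph} $R$ whose edges $uv$ correspond to $\varepsilon$-regular pairs of density at least some threshold $\delta(\varepsilon)$. Assign $R$ the edge weights $w(uv):=d(V_u,V_v)$, so that the normalized total weight of $R$ equals $e(G)/\binom{n}{2}+o(1)$.

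Both hypotheses then translate into constraints on $(R,w)$. First, the embedding lemma forces $R$ itself to be $K_q$-free. Second, and more delicately, the condition $\alpha_p(G)\le\varepsilon n$ means that each part $V_i$ is far from being $K_p$-free, so one can find many copies of $K_{p-1}$ and $K_p$ inside each part. By picking cliques inside each $V_i$ lying on a clique of $R$ through an edge $uv$ and gluing them across the regular tuple, the density $w(uv)$ controls the order of the largest clique of $G$ that can be assembled on the parts surrounding $uv$. Quantifying this gives explicit caps on $w(uv)$ as a function of the size of the largest clique of $R$ containing $uv$: exactly the kind of weight restriction to which the weighted Tur\'an theorems of the preceding sections apply.

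Plugging in $(p,q)\in\{(4,11),(5,12),(6,12),(6,14)\}$, the corresponding weighted Tur\'an theorem produces the promised suprema $\tfrac{4}{7},\tfrac{10}{19},\tfrac{5}{12},\tfrac{12}{23}$ for the normalized total weight of $R$, yielding the four upper bounds on $\varrho_p(q)$ after letting $\varepsilon\to 0$. For the matching lower bound $\varrho_5(12)\ge \tfrac{10}{19}$, note that $\tfrac{10}{19}=\varrho_5^{\star}(12)$ is realized by the three-part construction of Conjecture~\ref{conj:false} (taking $t=2$, $r=0$, as in Figure~\ref{fig:conjfalse}), with each part implemented by a Bollob\'as--Erd\H os-type graph so that $\alpha_5=o(n)$.

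The main obstacle is calibrating the weight caps in the second step so that they match \emph{precisely} those appearing in the weighted Tur\'an theorems --- too weak a cap and the resulting numerical bound will not hit the target fraction. Secondarily, the weighted Tur\'an theorems themselves split into two regimes: cases reachable via a clean graph-Lagrangian optimization, and cases (plausibly $\varrho_6(14)\le\tfrac{12}{23}$ and perhaps $\varrho_4(11)\le \tfrac{4}{7}$) that require a flag-algebra SDP certificate, whose rounding and formal verification are where the real computational work lies.
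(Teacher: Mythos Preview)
Your high-level scaffolding --- regularity, a weighted reduced graph, then a weighted Tur\'an bound --- matches the paper, but the middle step is misdescribed in a way that would not reach the target numbers. You posit that the embedding lemma yields ``caps on $w(uv)$ as a function of the size of the largest clique of $R$ containing $uv$'', i.e.\ a clique-weighting constraint of the kind handled by Theorem~\ref{thm:standard_weighting} and Corollary~\ref{conj:w}. That framework is too coarse: the extremal configuration for $\varrho_5(12)$ is a blow-up of a triangle with edge weights $(1,1,1/5)$, so any uniform cap on edges lying in a triangle of $R$ would have to be at least $1$ and gives nothing. What the paper actually extracts from the embedding argument (Lemma~\ref{lem:embedding}) is a finite family $\calF$ of \emph{forbidden weighted cliques} on three to five vertices --- specific density patterns (e.g.\ a triangle whose three weights exceed $1/5$, $1/2$, $4/5$ respectively) that allow one to assemble a $K_q$ in $G$ --- and then bounds $d(\calF)$. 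After discretising the weight intervals into finitely many colours this becomes an edge-coloured extremal problem, and \emph{all four} bounds are settled by a flag-algebra computation; the Lagrangian results of Sections~\ref{sec:proofs}--\ref{sec:otherw} are not used for Theorem~\ref{thm:summary} at all.

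Two smaller corrections. The lower bound $\varrho_5(12)\ge 10/19$ is not obtained by filling parts with Bollob\'as--Erd\H{o}s-type graphs; it requires the complex-sphere construction of Liu, Reiher, Sharifzadeh and Staden~\cite{hong}, which is the nontrivial ingredient that achieves $\alpha_5=o(n)$ while realising the cross-density $1/5$ without creating a $K_{12}$. And the operative constraint on $R$ is not ``$R$ is $K_q$-free'' but rather the absence of the specific weighted configurations exhibited in Section~\ref{sec:ramtur}.
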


To prove Theorem~\ref{thm:summary} we convert the problem of determining upper bounds for $\varrho_p(q)$ into an edge-weighted Tur\'an setting, as outlined below, and then use the method of flag algebras introduced by Razborov~\cite{Razborov}.

Let us now briefly explain how the upper bounds for $\varrho_p(q)$ are obtained. First, one applies Szemer\'edi's regularity lemma to obtain an $\varepsilon$-regular equipartition of a given graph $G.$ This gives an edge-weighted cluster graph, where the weights come from the densities of the corresponding regular pairs. Recall that the clusters in the partition have $\Omega(n)$ vertices. By the assumption on  $\alpha_p(G)$, every linear subset of a cluster contains a copy of $K_p.$ This allows one to show that certain weighted configurations in the cluster graph lead to the existence of a $K_q$ in $G$. For example, to prove that $\varrho_2(4) \le 1/4,$ one shows how to embed a $K_4$ in $G$ if there is a triangle in the cluster graph with all three pairwise densities bounded away from $0$ as well as how to embed a $K_4$ in $G$ if there are two clusters with density at least roughly $1/2$. After finding such weighted configurations which lead to the existence of the desired clique, the task reduces to a weighted Tur\'{a}n problem which we discuss next. The main obstacle for obtaining further upper bounds on Ramsey-Tur\'an densities has been a lack of such weighted Tur\'an results.

\subsection{Weighted Tur\'an problems} \label{subsec:introweighted}
Given a graph $G$, an \emph{edge weighting} $w$ is a mapping $w:E(G) \to [0,1]$.
We extend $w$ to $G$ as a scaled sum of the weights of all its edges 
\[
w(G) := \frac{2}{n^2}\sum_{e\in E(G)} w(e).
\]
Note that if we consider non-edges as edges of weight $0$ then $w(G)$ is asymptotically the average weight of edges of $G$, which is $w(G) \cdot \frac{n}{n-1}$. We scale $w(G)$ by ${2}/{n^2}$ rather than $\binom{n}{2}^{-1}$ since this gives nicer extremal values.

Given a positive integer $r$, a \emph{weighted clique} is a pair $(r,f)$ where $f$ is a function $f \colon \binom{[r]}{2} \rightarrow [0, 1].$
Given a graph $G$ with an edge weighting $w,$ we say that $G$ contains a weighted clique $(r, f)$ if there is an injective function $\phi \colon [r] \rightarrow V(G)$ such that for all $1 \le i < j \le r,$ $\phi(i)\phi(j) \in E(G)$ and $w(\phi(i)\phi(j)) > f(ij).$ Given a set $\calF$ of weighted cliques,  we say $G$ is $\calF$-{\it free} if it contains no element of $\calF$.

\begin{definition}
    Given a set $\calF$ of weighted cliques, we define for $n \in \mathbb{N}$
    \[ 
    \beta_n(\calF) := \max \{ w(G):\   |V(G)| = n, \, G \text{ is } \calF\text{-free}\}. \]
    Moreover, define
    \begin{align}
    d(\calF) := \lim_{n \rightarrow \infty} \beta_n(\calF). \label{eq:df}
    \end{align}
\end{definition}

\begin{problem}\label{prob:dcalF}
    Given $\calF,$ determine $d(\calF).$
\end{problem}
Recall that $w(G) \cdot \frac{n}{n-1}$ is the average weight of edges in $G$. Using a standard averaging argument, it follows that for every $\mathcal{\calF}$, the sequence  
$\beta_n({\calF}) \cdot \frac{n}{n-1}$ is  decreasing. Hence  $\lim_{n\rightarrow \infty} \beta_n(\mathcal{F}) \cdot \frac{n}{n-1}$ exists and  so does $d(\calF).$ As noted in the previous subsection, upper bounds on $d(\calF)$ for certain families $\calF$ imply upper bounds on the Ramsey-Tur\'an densities $\varrho_p(q).$

A similar problem was investigated by F\"{u}redi and K\"{u}ndgen~\cite{FurediKungden}. Namely, for given integers $k, r$ they studied the problem  of determining the maximum weight of an integer-weighted graph where no $k$ vertices induce a subgraph of total weight more than $r$.

A special case of Problem~\ref{prob:dcalF} can be stated as follows and it is a generalization of Tur\'a{n}'s theorem. We call a function $w$ from integers $2,3,\ldots$ to $[0,1]$ a \emph{clique weighting}. 
Let $G$ be a graph,  $e$ be an edge of $G$, and $r$ be the order of a largest clique containing $e$. 
Then,  we define
\[
w(e) := w(r).
\]

The general problem   for a fixed $w$ is  to maximize $w(G)$ over all $n$-vertex graphs $G$, or over some  family of graphs $G$.

\begin{problem}
Let $w$ be a clique weighting. Find $\alpha(n)$  for a fixed $n$,  or at least  determine its limit when $n$ goes to infinity, where
\[
\alpha(n) := \max_{G:|V(G)|=n} w(G) \quad\quad {\text and}\quad\quad \alpha:=\limsup_{n\to \infty}{\alpha(n)}.
\]
\end{problem}
Since there are only finitely many $n$-vertex graphs, up to isomorphism, the maximum is well-defined.
A particularly interesting clique weighting is
\[
w_T(r) := \frac{r}{2(r-1)},
\]
which we call \emph{Tur\'an weights}.
%
Notice that Tur\'an weights are defined such that
for every $k \geq 2$, we get
\[
\lim_{n \to \infty} w_T(T(n,k)) = \frac{1}{2},
\]
and $w_T(T(n,k)) = \frac{1}{2}$, whenever $n$ is divisible by $k$.
We prove that Tur\'an graphs are exactly the graphs maximizing $w_T(G)$ among all graphs $G$ on $n$ vertices as $n$ goes to infinity. 
\begin{theorem}\label{thm:standard_weighting}   
For every graph $G$ we have
\[
w_T(G) \leq \frac{1}{2}.
\]
\end{theorem}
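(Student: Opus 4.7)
The plan is a Motzkin--Straus / graph Lagrangian argument. Let $n = |V(G)|$ and consider the quadratic form
\[
P(x) \;=\; \sum_{uv\in E(G)} w_T(uv)\, x_u x_v, \qquad x \in \Delta := \Bigl\{x \in \mathbb{R}^{V(G)}_{\ge 0} : \sum_{v \in V(G)} x_v = 1\Bigr\}.
\]
At the uniform point $x_v = 1/n$ one has $P(x) = \tfrac{1}{n^2}\sum_{e} w_T(e) = \tfrac{1}{2}\,w_T(G)$, so it will suffice to prove that $P(x) \le \tfrac{1}{4}$ holds throughout $\Delta$.

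For the first step I will perform the standard clique-support reduction. Fix a maximizer $x^\star$ of $P$ on $\Delta$, which exists by compactness. If two vertices $u, v$ lie in the support of $x^\star$ with $uv \notin E(G)$, then on the segment $\{x : x_u + x_v = x^\star_u + x^\star_v,\ x_w = x^\star_w \text{ for } w \notin \{u,v\}\}$ the polynomial $P$ is \emph{linear} in $x_u$, since no $x_u x_v$ term is present. Moving all of the mass $x^\star_u + x^\star_v$ to whichever of $u, v$ gives the larger value does not decrease $P$ and strictly shrinks the support. Iterating, I may assume the maximizer is supported on a clique $K$ of $G$; set $r := |K|$.

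For the second step I will evaluate $P$ on this clique. Every pair $uv \in \binom{K}{2}$ is an edge of $G$ contained in a $K_r$, so $r(uv) \ge r$, and since $w_T$ is decreasing in $r$,
\[
w_T(uv) \;\le\; w_T(r) \;=\; \frac{r}{2(r-1)}.
\]
Combined with Cauchy--Schwarz applied to $\sum_{u \in K} x^\star_u = 1$,
\[
P(x^\star) \;\le\; \frac{r}{2(r-1)} \sum_{uv \in \binom{K}{2}} x^\star_u x^\star_v \;=\; \frac{r}{2(r-1)} \cdot \frac{1 - \sum_{u \in K} (x^\star_u)^2}{2} \;\le\; \frac{r}{2(r-1)} \cdot \frac{r-1}{2r} \;=\; \frac{1}{4}.
\]
Substituting $x = (1/n, \dots, 1/n)$ then yields $w_T(G) \le 1/2$.

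I do not foresee a substantive obstacle. The Tur\'an weights are calibrated precisely so that $\frac{r}{2(r-1)} \cdot \frac{r-1}{2r} = \frac{1}{4}$ for every $r \ge 2$; this cancellation is also the reason why both $K_n$ and every balanced Tur\'an graph $T(n,k)$ attain $w_T(G) = \tfrac{1}{2}$ in the limit, so the bound is tight and the argument above is essentially the only one available.
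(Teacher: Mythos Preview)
Your proof is correct and follows essentially the same graph-Lagrangian approach as the paper: both reduce the maximum of the quadratic form on the simplex to a clique, then use the monotonicity of $w_T$ together with the bound $\sum_{u\in K}(x_u)^2 \ge 1/|K|$ to obtain $1/4$. The only cosmetic difference is that the paper invokes Lemma~\ref{lem:lagrangians} on dense matrices to get the clique reduction, whereas you carry out the classical Motzkin--Straus mass-shifting argument directly.
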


Theorem~\ref{thm:standard_weighting} was used by Balogh, Chen, McCourt and Murley~\cite{BaloghRamseyTuran} to obtain upper bounds for certain Ramsey-Tur\'an problems.
During the preparation of this work, Theorem~\ref{thm:standard_weighting} was independently proven by Malec and Tompkins \cite{malec2022localized}.

We also prove a stability version of Theorem~\ref{thm:standard_weighting}.
\begin{theorem} \label{thm:standard_weighting_stability}
For every $\varepsilon > 0,$ there  exist $n_0$ and $\delta > 0$ such that if $w_T(G)  \geq 1/2 -\delta$ and $n \geq n_0$,
then $G$ is in edit distance at most $\varepsilon n^2$ from some Tur\'an graph.
\end{theorem}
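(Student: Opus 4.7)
The natural plan is a compactness argument by contradiction, reducing weighted stability to the equality case of Theorem~\ref{thm:standard_weighting} at the graphon level. Suppose the conclusion fails for some $\varepsilon > 0$: there is a sequence of $n$-vertex graphs $G_n$ with $w_T(G_n) \to 1/2$ but each $G_n$ at edit distance more than $\varepsilon n^2$ from every Turán graph. Pass to a subsequence along which $G_n$ converges in the cut metric to a graphon $W:[0,1]^2\to[0,1]$. The goal is to show that $w_T(W) = 1/2$ forces $W$ to be (a.e.\ equal to) a Turán graphon $W_k$ for some $k \in \{2, 3, \ldots\} \cup \{\infty\}$. This contradicts the edit-distance lower bound, since cut-convergence to $W_k$ forces $G_n$ to be $o(n^2)$-close in edit distance to $T(n,k)$ (by standard counting/removal arguments, with the convention that $T(n,\infty)$ denotes $K_n$).

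The technical core is defining $w_T$ on graphons and establishing its continuity. The obstruction is that $w_T(\omega_G(e))$ depends on the size of the largest clique containing $e$, which is not continuous as a functional of the graphon. To get around this, for each integer $R \ge 2$ I would introduce the truncated weight
\[
w_T^{(R)}(G) := \frac{1}{n^2}\Bigl(\sum_{2 \le r < R} m_r \cdot \frac{r}{r-1} \;+\; e_{\ge R}\cdot \frac{R}{R-1}\Bigr),
\]
where $m_r$ is the number of edges of $G$ whose largest containing clique has order exactly $r$, and $e_{\ge R}$ counts edges lying in some $K_R$. Since $r/(r-1)$ is decreasing in $r$ and $R/(R-1) \ge r/(r-1)$ for $r \ge R$, one has $w_T^{(R)}(G) \ge w_T(G)$ with $w_T^{(R)}(G) - w_T(G) \le 1/(2(R-1))$ uniformly in $G$. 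Crucially $w_T^{(R)}$ depends on $G$ only through the counts of $K_2, \ldots, K_R$ subgraphs, so it extends continuously to graphons; diagonalising in $R$ and $n$ yields $w_T(W) \ge 1/2$, while Theorem~\ref{thm:standard_weighting} applied in the limit gives $w_T(W) \le 1/2$, so equality holds.

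The main obstacle is the equality characterisation: $w_T(W) = 1/2$ must force $W$ to be a Turán graphon. One strategy is to lift the proof of Theorem~\ref{thm:standard_weighting} (via graph Lagrangians) to graphons and analyse its equality case, where Zykov-type symmetrisation between vertices with the same local structure collapses $W$ into a complete multipartite graphon, and weight calculations then force the parts to be equal-sized. An alternative that avoids graphons entirely is to prove a direct quantitative defect inequality $1/2 - w_T(G) \ge c(\varepsilon) > 0$ whenever $G$ is more than $\varepsilon n^2$-far from every Turán graph, from which the theorem follows immediately. This structural step, not the compactness framework, is where the real work of the stability statement lies.
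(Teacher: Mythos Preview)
Your compactness framework has a concrete gap: the claim that $w_T^{(R)}$ ``depends on $G$ only through the counts of $K_2,\ldots,K_R$ subgraphs'' is false, and consequently $w_T^{(R)}$ does not extend continuously to graphons. The quantities $m_r$ and $e_{\ge R}$ record, for each edge, whether it lies in \emph{at least one} $K_r$; this is a threshold, not a polynomial in homomorphism densities. For a counterexample, let $G_n=T(n,2)$ and let $H_n$ be $T(n,2)$ together with a perfect matching inside each part. Both sequences converge in cut distance to the same balanced bipartite graphon, and their $K_r$-densities agree in the limit for every $r$. Yet in $G_n$ no edge lies in a triangle, whereas in $H_n$ every edge lies in a $K_4$; hence $w_T^{(3)}(G_n)\to 1/2$ while $w_T^{(3)}(H_n)\to 3/8$. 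So $w_T^{(R)}$ is not determined by the limit graphon, and the step ``$w_T^{(R)}$ extends continuously to graphons; diagonalising in $R$ and $n$ yields $w_T(W)\ge 1/2$'' does not go through. This is not a patchable technicality: any functional built from ``edge $e$ lies in some $K_r$'' will have the same discontinuity, because a vanishing density of well-placed extra edges can push every edge into a large clique without moving the cut limit.

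Beyond this, you explicitly defer the equality characterisation (``this structural step \ldots\ is where the real work lies''), but that characterisation \emph{is} the theorem; the compactness wrapper contributes nothing once it is in hand. The paper's proof is direct and avoids limit objects: it greedily peels off maximum cliques $C_1,C_2,\ldots$ until a $\delta'$-fraction of the vertices has been removed, uses the pointwise bound $W(\{v\},C_i)\le |C_i|/2$ to control the cross-weight $W(A,B)$, and shows that near-tightness of this bound forces almost every remaining vertex to be adjacent to almost all of some $C_i$; the Tur\'an structure is then read off by a short case analysis on $|C_i|$. If you want to rescue a limit-object argument, you would need a genuinely different, cut-continuous surrogate for $w_T$ \emph{and} a proof of the graphon equality case, which is at least as hard as the direct approach.
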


The rest of the paper is structured as follows. In Section~\ref{sec:proofs}, we prove our results on Tur\'{a}n weights and discuss one application. In Section~\ref{sec:otherw}, we prove some results on the general weighted Tur\'{a}n problem which might be of independent interest. Finally, we prove Theorem~\ref{thm:summary} in Section~\ref{sec:ramtur}.

\section{Proofs of Theorems~\ref{thm:standard_weighting} and~\ref{thm:standard_weighting_stability}}
\label{sec:proofs}

Our proofs are based on the method of graph Lagrangians, first introduced by Motzkin and Straus \cite{motzkin1965maxima}. Following \cite{hong}, given an $m \times m$ matrix $A,$ we define 
\[ g(A) \coloneqq \max \left\{ \bu^{\intercal} A \bu \, \big\vert \, \bu = (u_1, \dots, u_m)^\intercal,\ \  \sum_{i=1}^m u_i = 1,\  \ u_i \ge 0 \right\}. \]
The maximum is attained since it is taken over a compact set. A vector attaining the maximum is  \emph{optimal} for $A$. We say that $A$ is \emph{dense} if it has zero diagonal and for every $i \in [m],$ the matrix $A'$ obtained by removing its $i^{th}$ row and column satisfies $g(A') < g(A).$ We use the following lemma from \cite{hong} which lists useful properties of optimal vectors of a dense matrix.

\begin{lemma}[Liu, Reiher, Sharifzadeh, and Staden \cite{hong}] \label{lem:lagrangians}
    Let $m \in \mathbb{N}$, $A = (a_{ij})$ be a dense symmetric $m \times m$ matrix with nonnegative entries, and $\bu$ be optimal for $A.$ Then,
    \begin{enumerate}[label=\alph*)]
        \item $A$ is positive, that is, $a_{ij} > 0$ for every $1 \le i < j \le m,$
        \item $u_i > 0$ for every $i \in [m],$
        \item $\sum_{i\in [m] \setminus \{j\}} a_{ij} u_i = g(A),$ for every $j \in [m].$ \label{eq:degrees}
    \end{enumerate}
\end{lemma}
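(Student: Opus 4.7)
My plan is to treat the three items in the order (b), (c), (a), since each builds on the previous. The foundation is first-order optimality of $\bu$ on the simplex $\Delta_m = \{\bu \ge 0 : \sum u_i = 1\}$: a quick Lagrange multiplier computation for $\max \bu^\intercal A \bu$ on $\Delta_m$ shows that if $\bu$ is optimal then $(A\bu)_j = g(A)$ for every $j$ with $u_j > 0$ and $(A\bu)_j \le g(A)$ otherwise. This is a standard consequence of $g(A) = \bu^\intercal A \bu = \sum_j u_j (A\bu)_j$ together with the constraint $\sum u_j = 1$.

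Item (b) is then almost immediate. Suppose some $u_i = 0$; then the vector $\bu$ restricted to $[m] \setminus \{i\}$ is feasible for the principal submatrix $A'$ obtained by deleting the $i$-th row and column, and attains the value $g(A)$ there. Combined with the trivial bound $g(A') \le g(A)$ (extend any candidate by a zero in coordinate $i$), this gives $g(A') = g(A)$, contradicting denseness. Item (c) then follows directly from the KKT condition above, because (b) guarantees $u_j > 0$ for every $j$, and the zero diagonal of $A$ lets us rewrite $(A\bu)_j$ as $\sum_{i \ne j} a_{ij} u_i$.

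For (a), the key move is a mass transfer. Assuming $a_{ij} = 0$ for some $i \ne j$, consider $\bu(t) := \bu - t e_i + t e_j$ for $t \in [0, u_i]$; by (b), this stays in $\Delta_m$. Expanding, one gets $\bu(t)^\intercal A \bu(t) = \bu^\intercal A \bu + 2t\bigl((A\bu)_j - (A\bu)_i\bigr) + t^2 (a_{ii} + a_{jj} - 2 a_{ij})$. The linear term in $t$ vanishes by (c) applied to both $i$ and $j$, and the quadratic term vanishes because $a_{ii} = a_{jj} = a_{ij} = 0$. Hence $\bu(u_i)$ is also optimal but now has a zero in coordinate $i$, reducing to the situation of (b) and again contradicting denseness.

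The only real subtlety I expect is keeping the logical order straight—(b) must be derivable from denseness alone, and (c) must be available before the mass-transfer argument in (a)—so that the linear coefficient in the expansion is known to cancel. Beyond that bookkeeping, the proof is essentially a careful use of the KKT conditions together with one convex-combination move on the simplex.
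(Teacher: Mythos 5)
Your argument is correct: the KKT/first-order conditions on the simplex give the equal-gradient property on the support, deleting a zero coordinate contradicts denseness, and the mass-transfer $\bu \mapsto \bu - t e_i + t e_j$ (using the zero diagonal and $a_{ij}=0$ to kill the quadratic term) produces an optimal vector with a zero coordinate, yielding (a). The paper itself imports this lemma from Liu, Reiher, Sharifzadeh, and Staden without proof, and your proof is the standard one for such Motzkin--Straus-type statements, so there is nothing to flag beyond noting that the pointwise equality $(A\bu)_j = g(A)$ on the support follows from stationarity (a small mass shift between two support coordinates), not merely from the averaging identity $g(A)=\sum_j u_j (A\bu)_j$.
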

We note that in the original formulation of \cite{hong}, the entries of $A$ were restricted to the set $\{0, 1, \dots, p\}$ for some positive integer $p.$ However, the proof in \cite{hong} gives this stronger statement as well.

\begin{proof}[Proof of Theorem~\ref{thm:standard_weighting}]
    Assume that $G$ has at least one edge, otherwise the statement is trivial. Label the vertices of $G$ by $\{v_1, \dots, v_n\}$. Define a symmetric  $n \times n$ matrix $A = (a_{ij})$ such that $a_{ij} = w_T(v_iv_j)$ if $v_iv_j \in E(G)$ and $a_{ij} = 0$ otherwise. 
    Let $A'$ be a minimal, by inclusion,  principal submatrix of $A$  maximizing $g(A')$. 
    Since $A'$ is minimal by inclusion, it is dense. It is also nonempty, since $g$ is $0$ on the empty matrix. Let $\bu$ be optimal for $A'.$ Let $K \subseteq [n]$ be the set of indices of vertices corresponding to the rows (and columns) of $A'$ and write $k = |K|.$ By Lemma~\ref{lem:lagrangians}, $A'$ is positive, that is, the vertices indexed by $K$ form a clique in $G.$ Since $w_T(r)$ is decreasing in $r,$ it follows that $a_{ij} \le w_T(k)$ for all $i, j \in K.$ Hence,
    \[ g(A) \le g(A') \leq \sum_{i \in K} u_i \sum_{j \in K, j \neq i} u_j w_T(k) = w_T(k) \sum_{i \in K} u_i (1 - u_i) = w_T(k) \left(1 - \sum_{i \in K} u_i^2\right) \le w_T(k)  \left(1 - \frac1k \right) = \frac12, \]
    where we used that $\sum_{i \in K} u_i = 1$ and in the last inequality we used Jensen's inequality. On the other hand, by using $\bx = (1/n, \dots, 1/n),$ we obtain
    \[ w_T(G) = \frac{2}{n^2} \sum_{e \in E(G)} w_T(e) = 
    \bx^\intercal A \bx \le g(A) \le \frac12, \]
    completing the proof.
\end{proof}

Next, we prove the stability version as given by Theorem~\ref{thm:standard_weighting_stability}. Our proof is based on a careful analysis of the argument of \cite{malec2022localized}.
\begin{proof}[Proof of Theorem~\ref{thm:standard_weighting_stability}]
    Fix $\varepsilon > 0$ and let $0 < \delta \ll \varepsilon$ be a small constant to be chosen implicitly later. Additionally, let $\delta' = 10 \delta^{1/3}.$ Let $G$ be an $n$-vertex graph with sufficiently large $n$ satisfying $w_T(G) \ge 1/2 - \delta.$ We write  $V = V(G).$ For $U \subseteq V,$ define $W(U) := \sum_{e \in E(G[U])} w_T(e)$ and note that $W(V) = \frac{n^2}{2} w_T(G).$ For disjoint sets $A, B \subseteq V(G),$ define $W(A, B) := \sum_{e \in E(G[A,B])} w_T(e).$ 

     Define a sequence of graphs $G_1, G_2, \dots$ as follows: set $G_1 = G$; for each  $i \ge 1,$ let $C_i$ be a clique of maximum size in $G_i$ and let $G_{i+1} := G_i \setminus C_i.$ Let $t \ge 1$ be the minimum index such that $|V(G_{t+1})| \le (1 - \delta') n.$ Let $A = \bigcup_{i=1}^t V(C_i)$ and $B = V(G) \setminus A.$ 
    
    If $|B| \le (\delta')^{1/3} n,$ then  $t = 1$. This implies that $C_1$ is a clique in $G$ of size at least $(1 - (\delta')^{1/3})n$, and the statement trivially holds. Hence, from now on  we may  assume $|B| > (\delta')^{1/3}n.$ 
    We also have $|A| \geq \delta'n$ by the definition. 
    Together they imply 
    $$|A||B| > \min\{(\delta')^{1/3} (1 - (\delta')^{1/3})n^2, \delta'(1 - \delta')n^2\} > 0.99\delta' n^2.$$

    We use the following:
    \[ 
    \frac{n^2}{2}w_T(G)=W(V) = W(A) + W(B) + W(A, B).
    \]
    
     By Theorem~\ref{thm:standard_weighting}, we have that $W(A) \le |A|^2 / 4$ and $W(B) \le |B|^2 / 4.$ Using $|A| + |B| = n$ and our initial assumption that $w_T(G) \geq 1/2-\delta$, we have
    \begin{align}
        W(A, B) &= W(V) - W(A) - W(B) \ge \frac{1}{4} \left( (1 - 2\delta)n^2 - |A|^2 - |B|^2 \right) = \frac{1}{4} \big( 2(1-2\delta) |A||B| - 2\delta|A|^2 - 2\delta|B|^2\big)\nonumber\\
        &\ge \left(\frac{1}{2} - \delta - \frac{\delta n^2}{|A||B|}\right)|A||B| \ge \left(\frac{1}{2} - \frac{1}{4}\delta'^2 \right) |A||B|,\label{eq:WAB-lb}
    \end{align}
    where in the last relation we used $\delta' = 10\delta^{1/3}$ and $|A||B| \ge 0.99 \delta' n^2.$
    
    Fix $i \le t, v \in B$ and let $N = N(v) \cap C_i.$ Note that $N \cup \{v\}$ is a clique.  Denote $|N|$ by $e(v, C_i)$. Hence, $N \neq C_i$ as otherwise $C_i \cup \{v\}$ is a  clique larger than $C_i$ in $G_i.$ Each edge from $v$ to $C_i$ is in the clique $N \cup \{v\}$, so its weight is at most $w_T(|N| + 1).$
    Hence, 
    \[ W(\{v\}, C_i) \le |N| w_T(|N| + 1) = \frac{|N| + 1}{2} = \frac{e(v, C_i) + 1}{2} \le \frac{|C_i|}2. \]
    
    Now, we can obtain an upper bound on $W(A, B)$:
    \begin{equation} \label{eq:AB-general}
        W(A, B) = \sum_{i=1}^t \sum_{v \in B} W(C_i, \{v\}) \le \sum_{i=1}^t \sum_{v \in B} \frac{1}{2}(e(v, C_i) + 1) \le \sum_{i=1}^t \sum_{v \in B} \frac{1}{2} |C_i| =  \frac{|A||B|}{2}.
    \end{equation} 
    
    Assume that for all $i \in [t],$ there are at least $\delta' n$ vertices $v \in B$ such that $e(v, C_i) + 1 \leq (1 - \delta')|C_i|.$ Then, we can obtain a stronger bound on $W(A, B)$ as follows:
    \begin{align*}
        W(A, B) &= \sum_{i=1}^t \sum_{v \in B} W(C_i, \{v\}) \leq \sum_{i=1}^t \sum_{v \in B} \frac{1}{2}(e(v, C_i) + 1) \le \sum_{i=1}^t \frac{1}{2}\big((|B| - \delta'n) |C_i| + \delta' n (1 - \delta') |C_i|\big)\\
        &= \sum_{i=1}^t \frac{1}{2}|C_i| (|B| - \delta'^2 n) \le \frac{1}{2}\left(1 - \delta'^2\right)|A||B|,
    \end{align*}
    a contradiction with \eqref{eq:WAB-lb}.
    
    Hence, we can fix an $i \in [t]$  such that for all but at most $\delta' n$ vertices $v \in B$ it holds that $e(v, C_i)+1   > (1 - \delta') |C_i|.$ Let $B' \subseteq B$ denote the set of such vertices, so $|B'| \ge |B| - \delta'n \ge ((\delta')^{1/3} - \delta')n.$
    
    Assume first that $|A| \ge 2 \delta' n.$ Then, $i = t = 1$ and $A = C_1$ is a clique. Let $uv \in G[B']$ be an arbitrary edge. Note that
    \[
        |N(u) \cap N(v) \cap A| \ge (1 - 2\delta')|A|-2,
    \]
    hence 
    \[
    w_T(uv) \le w_T((1 - 2\delta')|A| ) < \frac{1}{2} + \frac{2}{|A|} < \frac{1}{2} + \frac{1}{\delta' n}.
        \]
    Since there are at most $\delta' n^2$ edges touching $B \setminus B',$ using Theorem~\ref{thm:standard_weighting} and \eqref{eq:AB-general}, we have
    \begin{align*}
        \frac{n^2}{2} \left(\frac{1}{2} - \delta\right) &\le W(V) \le W(A) + W(A,B') + W(B') + \delta' n^2 \le \frac{|A|^2}{ 4} + \frac{|A|(n - |A|)}{ 2} + W(B') + \delta' n^2 \\
        &= W(B') + \frac{n|A|}{ 2} - \frac{|A|^2}{ 4} + \delta' n^2,
    \end{align*}
    which implies 
    \[ 
    W(B') \ge \frac{n^2}{ 4} - \frac{n|A|}{ 2} + \frac{|A|^2}{ 4} - 2 \delta' n^2 = \frac{(n - |A|)^2}{4} - 2 \delta' n^2 \ge \left(\frac{1}{4} - 4 (\delta')^{1/3}\right) |B'|^2, 
    \]
    where the last inequality holds because $|B'| \ge ((\delta')^{1/3} - \delta') n.$ On the other hand, $W(B') \le e(G[B']) \cdot (1/2 + 1 / (\delta' n)),$ so 
    \[ 
    e(G[B']) \ge \frac{(1/4 - 4 (\delta')^{1/3}) |B'|^2}{1/2 + 1 / (\delta'n)} \ge \frac{1 - \varepsilon}{2} \cdot |B'|^2,
    \]
    where we chose $\delta$ to be small enough with respect to $\varepsilon.$ Hence, 
    \[
    e(G) \ge \binom{n}{2} - \frac{\varepsilon}{2} |B'|^2 - 2\delta' n^2 \ge (1 - \varepsilon) \binom{n}{2},
    \] 
    so $G$ has  edit distance at most $\varepsilon$ to the complete graph and we are done.
     
    Therefore, we may assume that $|A| < 2 \delta' n,$ so $|B| \ge (1 - 2\delta') n$ and $|B'| \ge (1 - 3\delta')n$.
    There are at most $3\delta' n^2$ edges touching $V \setminus B',$ so
    \[ W(B') \ge W(V) - 3\delta' n^2 > \frac{1}{4}(1 - 20\delta')n^2.\]
    Define $k := |C_t|,$ and recall that $C_t$ is the last clique removed from $G.$ There are two cases:\\
    
    \textit{Case 1: $k < 1 / \delta'.$} Then, for each $v \in B'$ we have $e(\{v\}, C_t) + 1 \ge (1 - \delta')k > k - 1.$ That is, each $v \in B'$ is adjacent to all but one vertex in $C_t.$ Hence, we can partition $B'$ into sets $B'_1, B'_2, \dots, B'_k$ according to which vertex of $C_t$ is missing in the neighbourhood of $v \in B'.$ Note that for every $j \in [k],$ $B'_j$ is an independent set. Indeed, otherwise taking the two vertices which form an edge in $B'_j$ and their $k-1$ common neighbours in $C_t,$ we obtain a clique of size $k+1$ in $G_t,$ a contradiction. Now, consider an edge $uv \in G[B'].$ Note that $|N(u) \cap N(v) \cap C_t| = k-2,$ so $uv$ lies in a clique of size $k.$ Hence, $w_T(uv) \le w_T(k).$ 
    Therefore,  $G[B']$ is a $k$-partite graph with 
    \[ e(G[B']) \ge \frac{W(B')}{w_T(k)} \ge \left(1 - 20\delta'\right) \frac{k-1}{k} \frac{n^2}{2}. \]
    By the standard stability result for $K_{k+1}$-free graphs \cite{erdsim}, it follows that $G[B']$ is in edit distance at most $\varepsilon / 2 n^2$ from $T(|B'|, k).$ Since $|B'| \ge (1 - 3\delta')n,$ the result follows.
    
    \textit{Case 2: $k \ge 1 / \delta'.$} Choose $u, v \in B'$ such that $uv \in E(G).$ Note that $|N(u) \cap N(v) \cap C_t| \ge (1 - 2\delta')k,$ hence $w_T(uv) \le w_T((1 - 2\delta')k + 2) < 1/2 + 1/k.$
    Hence, 
    $$e(G[B']) \ge \frac{W(B')}{1/2 + 1/k} \ge \frac{1 - 20\delta'}{1 + 2/k} \cdot \frac{n^2}{2} \ge (1 - \varepsilon/2)\frac{n^2}{2},$$ where we chose $\delta'$ to be sufficiently small compared to $\varepsilon.$ Again, since $|B'| \ge (1 - 3\delta')n,$ 
    $G$ is $\varepsilon$-close to the complete graph and the statement follows.
\end{proof}

To capture the extremal examples, we compare rescaled weights. Let $w$ be a clique weighting. We define a rescaling $t_w$ as  
\[
t_w(r) := \frac{2(r-1)}{r} w(r).
\]
Note that $t_{w_T}(r) = 1$ for all $r \in \mathbb{N}$.
Observe that 
\[
t_{w}(r) =  2\cdot \lim_{n\to\infty}w(T(n,r)),
\]
i.e. it is the same as twice the weights of Tur\'an graphs.  

\begin{corollary}\label{conj:w}
If $w$ is a clique weighting, where $1 = \max_r t_w(r)$, then for every $n$-vertex graph $G,$
\[
w(G) \leq \frac{1}{2}.
\]

Moreover, for every $\varepsilon_1, \varepsilon_2 > 0,$ there exist $n_0$ and $\delta > 0$ such that if 
for each $r$ either $t_w(r) = 1$ or $t_w(r) < 1-\varepsilon_2$,  
$w(G) \geq 1/2-\delta$, and $n \geq n_0$,
then $G$ is in edit distance at most $\varepsilon_1 n^2$ from some Tur\'an graph $T(n,r)$, where $r$ satisfies  $t_w(r)=1$.
\end{corollary}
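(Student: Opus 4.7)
The first part is immediate from the pointwise inequality: since $t_w(r) \le 1$, we have $w(r) = w_T(r)\, t_w(r) \le w_T(r)$ for every $r \ge 2$, and summing over edges gives $w(G) \le w_T(G) \le 1/2$ by Theorem~\ref{thm:standard_weighting}.

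For the stability claim, given $\varepsilon_1, \varepsilon_2 > 0$, I would choose $\delta > 0$ small (depending on $\varepsilon_1, \varepsilon_2$) and $n_0$ large. From $w(G) \ge 1/2 - \delta$ the same pointwise inequality yields $w_T(G) \ge 1/2 - \delta$, and Theorem~\ref{thm:standard_weighting_stability} furnishes some $k$ such that $G$ is at edit distance at most $\varepsilon_1 n^2$ from $T(n, k)$. The task is to show $t_w(k) = 1$.

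The key tool is the identity
\[
w_T(G) - w(G) = \frac{2}{n^2} \sum_{e \in E(G)} w_T(r_G(e))\bigl(1 - t_w(r_G(e))\bigr) \le \delta,
\]
where $r_G(e)$ denotes the order of a largest clique in $G$ containing $e$. Since $w_T(r) \ge 1/2$ for every $r \ge 2$ and the dichotomy forces $1 - t_w(r) \ge \varepsilon_2$ whenever $t_w(r) \ne 1$, every edge $e$ with $t_w(r_G(e)) < 1$ contributes at least $\varepsilon_2/2$ to the per-edge summand. Hence the number of such \emph{bad} edges is at most $\delta n^2/\varepsilon_2$. If $t_w(k) < 1 - \varepsilon_2$, every edge with $r_G(e) = k$ is bad, so the plan is to exhibit $\Omega(n^2)$ such edges and derive a contradiction for $\delta$ sufficiently small.

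That at most $O_k(\varepsilon_1 n^2)$ edges of $G$ have $r_G(e) < k$ is a straightforward pigeonhole over the $\Theta_k((n/k)^{k-2})$ copies of $K_k$ through each edge of $T(n,k)$. The delicate part---and the main obstacle---is controlling the edges of $G$ promoted into some $K_{k+1}$, because a single edge added within a part of $T(n,k)$ can in principle promote quadratically many edges, so a naive count fails. The approach is a dichotomy: either the set of promoted edges is small, in which case $\Omega(n^2)$ edges retain $r_G(e) = k$ and the contradiction closes; or the promoted set is so abundant that $G$ is also within edit distance $\varepsilon_1 n^2$ from $T(n, k+1)$, in which case the entire argument is replayed with $k$ replaced by $k+1$. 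Since the Turán parameter strictly increases at each iteration, the process must terminate at some $k'$ with $t_w(k') = 1$, yielding the required conclusion.
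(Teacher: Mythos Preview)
Your first part and the opening of the stability argument---the pointwise bound $w\le w_T$, the invocation of Theorem~\ref{thm:standard_weighting_stability}, and the bad-edge count via the identity $w_T(G)-w(G)=\frac{2}{n^2}\sum_e w_T(r_G(e))(1-t_w(r_G(e)))$---are correct and match the paper's route.

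The genuine gap is in your handling of promoted edges. The dichotomy you propose (``either few edges are promoted, or $G$ is also $\varepsilon_1 n^2$-close to $T(n,k+1)$'') is not established, and it is false as stated: take $T(n,k)$ and add a perfect matching inside one part. Only $o(n^2)$ edges are changed, yet \emph{every} cross-edge now lies in a $K_{k+1}$, so $\Theta(n^2)$ edges are promoted while $G$ remains $\Theta(n^2)$-far from $T(n,k+1)$. This particular example is ruled out by the hypothesis $w(G)\ge 1/2-\delta$, but that only shows any valid dichotomy must use the hypothesis in an essential way---something your sketch does not do. The iteration, as written, does not close.

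The paper sidesteps the promotion issue entirely with a single clean trick: it passes to the subgraph $G'=T(n,r)\setminus F\subseteq G$. Since $G'$ is $r$-partite, it is $K_{r+1}$-free, so no edge of $G'$ can be promoted; and because $|F|=o(n^2)$, a counting argument (each edge of $T(n,r)$ lies in $\Omega(n^{r-2})$ copies of $K_r$, while deleting one edge kills only $O(n^{r-2})$ of them) shows all but $o(n^2)$ edges of $G'$ still sit in a $K_r$, hence have $r_{G'}(e)=r$ exactly. This pins down $w(G')$ as $w(r)\cdot(1-1/r)+o(1)=\tfrac{1}{2}t_w(r)+o(1)$, and comparing with $1/2-\delta$ forces $t_w(r)=1$. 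No iteration is needed: restricting to the Tur\'an subgraph caps the clique number at $r$ in one stroke.
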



%
%

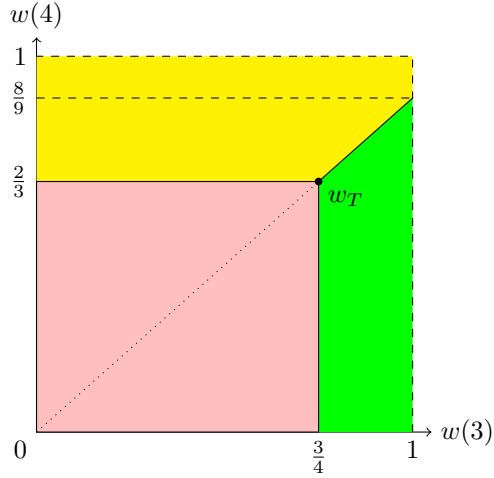
\begin{figure}
\begin{center}
    \begin{tikzpicture}[scale=2.5]
    \draw[->](0,0) -- (2.1,0) node[right]{$w(3)$};
    \draw[->](0,0) -- (0,2.1)node[above]{$w(4)$};;
    \draw(3/2,0) -- (3/2,4/3) -- (0,2/3);
    \draw (2,0) node[below]{$1$};
    \draw (0,2) node[left]{$1$};
    \draw (3/2,0) node[below]{$\frac{3}{4}$};
    \draw (0,4/3) node[left]{$\frac{2}{3}$};
    \draw (0,16/9) node[left]{$\frac{8}{9}$};
    \draw (3/2,4/3) -- (1,16/9);
    \draw (0,0) node[below left]{0};
    \draw[fill=pink] (0,0) rectangle (3/2,4/3);
    \fill[yellow] (0,4/3) -- (3/2,4/3) -- (2,16/9) -- (2,2) -- (0,2) -- cycle;
    \fill[green]  (3/2,0) -- (3/2,4/3) -- (2,16/9) -- (2,0) -- cycle; 
    \draw[dashed] (0,16/9) -- (2,16/9);
    \draw[fill = black](3/2,4/3) circle(0.5pt) node[below right]{$w_T$};
    \draw(3/2,0)--(3/2,4/3) -- (0,4/3) (3/2,4/3) -- (2,16/9);
    \draw[dotted] (0,0) -- (3/2,4/3);
    \draw[dashed](2,0)--(2,2)--(0,2);
    \end{tikzpicture}
\end{center}
\caption{Case with $w(2) = 1 = w_T(2)$ and point $w_T$ depicted.}\label{fig:w}
\end{figure}

\begin{proof}
Let $w$ be a clique weighting with $1=\max_rt_w(r)$.
Observe that $w(r) \leq w_T(r)$ for all $r$ as $t_w(r) \leq 1$.
Hence, $w(G) \leq w_T(G) \leq 1/2$ by Theorem~\ref{thm:standard_weighting}. If $w(G) \geq 1/2-\delta$ then $G$ is in edit distance $o(n^2)$ from a Tur\'an graph $T(n,r)$ for some $r$, by Theorem~\ref{thm:standard_weighting_stability}. So, we can write $E(G) = (E(T(n, r)) \setminus F) \cup F',$ where $|F|, |F'| = o(n^2).$ Let $G' = T(n, r) \setminus F,$ then $w(G') \ge 1/2 - \delta - o(1).$ Note that each edge in $T(n, r)$ is in $\Omega(n^{r-2})$ $r$-cliques and removing an edge from $T(n,r)$ removes $O(n^{r-2})$ $r$-cliques, so $G'$ has $o(n^r)$ fewer $r$-cliques than $T(n, r).$ It follows that all but $o(n^2)$ edges of $G'$ are contained in an $r$-clique, implying $w(r) = w_T(r)$, as claimed.
\end{proof}

Now we demonstrate Corollary~\ref{conj:w} on a small example of $K_5$-free graphs. Let $w$ be a clique-weighting, where $w(2)=1=w_T(2)$,
see Figure~\ref{fig:w} for guidance. Corollary~\ref{conj:w} implies the following (where we chose these particular numbers as they were used in~\cite{BaloghRamseyTuran}):
\begin{itemize}
\item If $w(3) \leq 3/4$ and $w(4) \leq 2/3$, then $T(n,2)$ is extremal.
\item If $w(3) \geq 3/4$ and $w(3) \geq \frac98 w(4)$, then $T(n,3)$ is extremal.
\item If $w(4) \geq 2/3$ and $w(3) \leq \frac98 w(4)$, then $T(n,4)$ is extremal.
\end{itemize}

\section{Other weights}\label{sec:otherw}
Recall the definitions of a weighted clique and of $d(\calF)$ from the beginning of
Section~\ref{subsec:introweighted}. For $a \in [0,1]$ we use $K_r^a$ to denote the weighted clique $(r, f)$ where $f(ij) = a$ for every  $ij \in \binom{[r]}{2}.$
Approaching Problem~\ref{prob:dcalF} of determining $d(\calF)$ for general $\calF$ seems to be hard. However, the following lemma proves that if $\calF$ contains $K_{r+1}^0$ (i.e. $\calF$-free graphs  are required to be $K_{r+1}$-free), then the asymptotically extremal graphs for $\beta_n$ are blow-ups of a $t$-clique for some $t \le r.$ We characterize these blow-ups as follows.

\begin{definition}
    Given a positive integer $t,$ a weighted clique $(t, f)$ and a $t$-tuple $\bx=(x_1, \dots, x_t)$ of positive real numbers satisfying $\sum_{i=1}^t x_i = 1,$ we define the \emph{$(n, t, f, \bx)$-blow-up} as the following weighted graph $G.$ Let $V(G) = V_1 \dot\cup V_2\cup  \dots \dot\cup V_t$,  where $|V_i| = \lfloor x_i n\rfloor$ for $1 \le i \le t-1$ and $|V_t| = n - \sum_{i=1}^{t-1} |V_i|.$ Let $G$ be the complete $t$-partite graph with parts $V_1, \dots, V_t$, where the weights of the edges between $V_i$ and $V_j$ are equal to $f(ij).$
\end{definition}

\begin{lemma} \label{lemma:main}
    Let $\calF$ be a finite set of weighted cliques containing $K_{r+1}^0.$ Then, there exists a weighted clique $(t, f)$ with $t \le r$  and a $t$-tuple $\bx=(x_1, \dots, x_t)$ of positive real numbers satisfying 
    \begin{enumerate}[label=\alph*)]
        \item $\sum_{i=1}^t x_i = 1,$
        \item \label{weighted:degrees} $\sum_{j \neq i} x_j f(ij) = d(\calF)$   for every $  i \in [t],$
    \end{enumerate}
    such that the $(n, t, f, \bx)$-blow-up $G_n$ is $\calF$-free and satisfies $w(G_n) = (1 - o(1)) d(\calF).$
\end{lemma}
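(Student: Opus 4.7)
The plan is to combine a Lagrangian argument in the spirit of the proof of Theorem~\ref{thm:standard_weighting} with a compactness/limit argument. For any candidate triple $(t, f, \bx)$ with $t \le r$, with $\bx$ a positive probability vector, and such that the $(n, t, f, \bx)$-blow-up is $\calF$-free, a direct computation yields $w(G_n) = 2\sum_{i<j} x_i x_j f(ij) + O(1/n)$. Let $D$ denote the supremum of $2\sum_{i<j} x_i x_j f(ij)$ over all such valid triples. Since every valid blow-up is an $\calF$-free graph, one has $D \le d(\calF)$, and the remaining tasks are to prove $D \ge d(\calF)$, to realize $D$ at some triple with $\bx > 0$, and to verify the Lagrange-type identity in part (b).

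For the lower bound, fix $\calF$-free graphs $H_n$ on $n$ vertices with $w(H_n) \to d(\calF)$, and define the symmetric matrix $A_n$ by $(A_n)_{ij} = w_n(v_i v_j)$ on edges and $0$ otherwise. Using the vector $\bx_0 = (1/n, \dots, 1/n)$ gives $w(H_n) = \bx_0^{\intercal} A_n \bx_0 \le g(A_n)$. Let $A_n'$ be a principal submatrix of $A_n$ that is minimal by inclusion among those attaining $g(A_n)$, so $A_n'$ is dense. By Lemma~\ref{lem:lagrangians}, its index set corresponds to a clique $K_n$ in $H_n$ with all edge weights positive, and since $\calF \ni K_{r+1}^0$ and $H_n$ is $\calF$-free, the size $t_n := |K_n|$ satisfies $t_n \le r$. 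The $N$-vertex blow-up $H_n^{(N)}$ of $K_n$ with proportions given by the optimal vector $\bu_n$ and edge weights inherited from $A_n'$ is $\calF$-free: any copy of $F = (s, g) \in \calF$ in $H_n^{(N)}$ would use at most one vertex per part and therefore induce a copy of $F$ on $s$ vertices of $K_n$ inside $H_n$, contradicting the $\calF$-freeness of $H_n$. Since $w(H_n^{(N)}) \to \bu_n^{\intercal} A_n' \bu_n = g(A_n')$ as $N \to \infty$, one obtains $g(A_n') \le d(\calF)$, and combined with $w(H_n) \le g(A_n')$ this forces $g(A_n') \to d(\calF)$.

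Next, passing to a subsequence by compactness, $t_n$ stabilizes to some $t \le r$, $A_n' \to M$ entrywise, and $\bu_n \to \bx$, where $\bx$ is a probability vector on $[t]$ with $\bx^{\intercal} M \bx = d(\calF)$. The pair $(t, M)$ still satisfies the requirement that the blow-up be $\calF$-free, because this requirement is a closed condition on $(t, f)$: for each of the finitely many pairs $(F, \sigma)$ with $F = (s, g) \in \calF$ and $\sigma \colon [s] \to [t]$ an injection, there must exist $i < j$ with $f(\sigma(i)\sigma(j)) \le g(ij)$, and this survives taking limits by a pigeonhole over the finite choices of $(i, j)$. Dropping any coordinates where $x_i = 0$ then produces a triple $(t', f', \bx')$ with $t' \le r$, $\bx' > 0$, blow-up weight still equal to $d(\calF)$, and blow-up still $\calF$-free (a sub-blow-up of an $\calF$-free graph). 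Thus $D$ is attained at $(t', f', \bx')$, which will be the triple promised by the lemma.

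For the Lagrange identity, observe that the $\calF$-freeness of the blow-up depends only on $(t', f')$, so for fixed $f'$ the vector $\bx'$ maximizes $2\sum_{i<j} x_i x_j f'(ij)$ over the entire closed simplex of probability vectors on $[t']$. Because $\bx'$ lies in the interior, standard Lagrange multipliers give $\sum_{j \neq i} x_j' f'(ij) = \lambda/2$ for some constant $\lambda$ and every $i \in [t']$; multiplying the $i$-th equation by $x_i'$ and summing shows $\lambda/2 = 2\sum_{i<j} x_i' x_j' f'(ij) = d(\calF)$, which is exactly condition (b). I expect the main technical subtlety to be the bookkeeping involved in preserving $\calF$-freeness under limits of weighted cliques and under deletion of zero coordinates; this is elementary but must be handled with care, since the definition of $\calF$-freeness involves strict inequalities.
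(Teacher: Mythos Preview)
Your proof is correct and follows essentially the same route as the paper's: apply the Lagrangian lemma to near-extremal $\calF$-free graphs to extract a small clique of size at most $r$ with an optimal weight vector, observe that blowing this clique up yields an $\calF$-free graph of asymptotically the same weight, and then pass to a limit by compactness. The paper's write-up is terser and simply says the limit ``implies the statement''; your version is more careful in spelling out why $\calF$-freeness survives the limit (via the closed-condition pigeonhole over finitely many $(F,\sigma,i,j)$), why one may discard zero coordinates of $\bx$, and why condition~(b) holds at the limit (re-deriving it by Lagrange multipliers from the maximality of $\bx'$ rather than relying on part~(c) of Lemma~\ref{lem:lagrangians} at each finite stage). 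These are exactly the right details to check, and your concern about strict inequalities in the definition of containment is well placed; the pigeonhole argument you give handles it correctly.
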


\begin{proof}
    Let $(G, w)$ be a weighted $\calF$-free graph on  vertex set $[n]$ with $w(G) = \beta_n(\calF).$ Let $A = (a_{ij})$ be the $n\times n$ symmetric matrix such that $a_{ij} = w(ij)$ and $a_{ii} =0$. As in the proof of Theorem~\ref{thm:standard_weighting}, we apply Lemma~\ref{lem:lagrangians} to obtain a clique $K$ on $\{v_1, \dots, v_t\}$ with $t \le r$ and positive real numbers $x_1,\dots, x_t$ with $\sum_{i=1}^t x_i = 1$ and $\sum_{j \in [t], j \neq i} x_j w(v_iv_j) = g(A)$ for all $i \in [t].$ Then, 
    \[ \beta_n(\calF) = w(G) = \frac{2}{n^2} \sum_{e \in E(G)} w(e)  \le g(A). \]
    On the other hand, for $\bx = (x_1, \dots, x_t)$ and $f \colon {[t] \choose 2} \rightarrow [0,1]$ defined by $f(ij) = w(v_iv_j),$ the $(n, t, f, \bx)$-blowup $G_n$ is $\calF$-free because $G$ is $\calF$-free and satisfies
    \[ w(G_n) \sim \sum_{1 \le i < j \le t} w(v_iv_j) (x_i n)(x_j n) \cdot \frac{2}{n^2} = \sum_{1 \le i \neq j \le t} w(v_iv_j) x_i x_j = g(A). \]

    Therefore, for each $n \in \mathbb{N},$ we obtain parameters $t(n) \le r,$ and $\bx(n) \in \{0, 1\}^{t(n)}$ as above. Hence, there exist $t \le r$ and an infinite sequence $a_n$ of positive integers such that $t(a_n) = t$ and $\lim_{n \rightarrow \infty} \bx(a_n) = \bx,$ which implies the statement.
\end{proof}

A triangle is \emph{$(a,i)$-heavy} if at least $i$ of its edges have weight strictly more than $a$. The following theorem determines the extremal $K_4$-free graphs with no $(a, 3)$-heavy triangles.
\begin{theorem}\label{thm:heavy}
Let $0 < a \le 1$ and $\calF = \{ K_3^a, K_4^0\}$, then  $d(\calF) = \frac{2}{4-a}.$
\end{theorem}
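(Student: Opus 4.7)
The plan is to apply Lemma~\ref{lemma:main} with $r=3$: since $K_4^0 \in \calF$, the quantity $d(\calF)$ equals (the supremum of) $w(G_n)$ over $(n,t,f,\bx)$-blow-ups with $t \le 3$ that are $\calF$-free, so I only need to analyze three very small cases. The $t=1$ blow-up contributes $0$, and for $t=2$ there are no triangles, so the only constraint is $f(12)\in[0,1]$, giving $w(G_n) = 2x_1 x_2 f(12) \le \tfrac{1}{2} < \tfrac{2}{4-a}$ for $a\in(0,1]$. Hence the extremum must come from the tripartite case $t=3$.

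For $t=3$ the blow-up is automatically $K_4$-free, and each of its triangles uses one vertex from each part, carrying weights $f(12), f(13), f(23)$. Being $K_3^a$-free is therefore equivalent to the single inequality $\min\{f(12),f(13),f(23)\} \le a$. Because $w(G_n)$ is nondecreasing in each $f(ij)$, I can (WLOG) take $f(12) = a$ and $f(13) = f(23) = 1$, so the problem reduces to maximizing
\[
h(\bx) := a\,x_1 x_2 + x_1 x_3 + x_2 x_3
\]
over the simplex $\{\bx \ge 0 : x_1+x_2+x_3 = 1\}$. The interior critical-point equations $\partial_1 h = \partial_2 h = \partial_3 h$ are exactly condition \ref{weighted:degrees} of Lemma~\ref{lemma:main}, and they force $x_1=x_2$ and $x_3=(2-a)x_1$, which gives $x_1=x_2=\tfrac{1}{4-a}$, $x_3=\tfrac{2-a}{4-a}$ and $2h(\bx) = \tfrac{2}{4-a}$. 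Boundary points of the simplex reduce to the already-handled $t\le 2$ cases, so this is the global maximum and yields the upper bound $d(\calF) \le \tfrac{2}{4-a}$.

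The matching lower bound is furnished by the very same blow-up: it is tripartite, hence $K_4$-free, and each of its triangles has an edge of weight exactly $a$, not strictly greater than $a$, so no copy of $K_3^a$ is present. I expect the main (modest) subtlety to be the monotonicity argument reducing to $f(12)=a$ and $f(13)=f(23)=1$; once that is in hand, the remaining Lagrangian computation on a three-point simplex is entirely routine.
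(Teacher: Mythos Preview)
Your proposal is correct and follows essentially the same approach as the paper: apply Lemma~\ref{lemma:main} with $r=3$, dispose of $t\le 2$ (giving at most $\tfrac12$), and in the $t=3$ case use that one edge must have weight at most $a$, push the other two to $1$, and solve the resulting constrained optimization via the degree equations from part~\ref{weighted:degrees} of Lemma~\ref{lemma:main}. The only cosmetic difference is that you label the light edge as $f(12)$ rather than $f(23)$ and spell out the lower-bound verification a bit more explicitly.
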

\begin{proof}
    Applying Lemma~\ref{lemma:main} with $r=3$ gives us a weighted clique $(t, f)$ and a $t$-tuple $\bx$ such that the $(n, t, f, \bx)$-blowup $G_n$ is asymptotically optimal. If $t = 2,$ then $w(G_n) = 1/2.$ If $t = 3,$ then one of the weights  is at most $a$. Without loss of generality, assume $f(23) \le a.$ Clearly, $w(G_n)$ is maximized when $f(12) = f(13) = 1$ and $f(23) = a.$ Then we have $d(\calF) = x_2 + x_3 = x_1 + a x_3 = x_1 + ax_2$ and $x_1 + x_2 + x_3 = 1.$ Solving this system of equations, we obtain $x_1 = \frac{2-a}{4-a}, x_2 = x_3 = \frac{1}{4-a}$ and $d(\calF) = \frac{2}{4-a} > 1/2,$ as needed.
    See Figure~\ref{fig:11a} for a sketch of the construction.
    
\end{proof}

Taking $a=3/4$ we obtain the following corollary which was used in \cite{BaloghRamseyTuran} to obtain several Ramsey-Tur\'{a}n type of results.
\begin{corollary} 
Let $G$ be an $n$-vertex $K_4$-free graph with a weight function $w : E(G) \to (0,1]$. If $G$ contains no $(3/4, 3)$-heavy triangles, then $w(G) \leq \frac{8}{13} + o(1).$
\end{corollary}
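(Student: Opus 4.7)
The plan is to obtain this corollary as the $a = 3/4$ specialization of Theorem~\ref{thm:heavy}, after matching the terminology of the corollary with the weighted-clique framework of Section~\ref{subsec:introweighted}.

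First I would verify the translation of hypotheses. Set $\calF = \{K_3^{3/4}, K_4^0\}$. Containment of the weighted clique $K_4^0$ in $G$ requires an injection $\phi\colon [4] \to V(G)$ with every pair carrying weight strictly greater than $0$; since $w$ maps into $(0,1]$, every edge of $G$ has positive weight, so $K_4^0$-freeness is exactly $K_4$-freeness. Similarly, a $(3/4, 3)$-heavy triangle is by definition a triangle whose three edges all carry weight strictly greater than $3/4$, which is precisely a copy of the weighted clique $K_3^{3/4}$ in $G$. Therefore the hypothesis of the corollary is equivalent to $G$ being $\calF$-free.

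Applying Theorem~\ref{thm:heavy} with $a = 3/4$ then yields
\[
d(\calF) = \frac{2}{4-a} = \frac{2}{4 - 3/4} = \frac{8}{13},
\]
and since $d(\calF) = \lim_{n\to\infty} \beta_n(\calF)$ and $G$ is $\calF$-free on $n$ vertices, we conclude $w(G) \le \beta_n(\calF) = \tfrac{8}{13} + o(1)$. There is no genuine obstacle; the proof is a direct reduction once the terminology is unpacked. As a sanity check, the matching extremal construction provided by the proof of Theorem~\ref{thm:heavy} is the $(n, 3, f, \bx)$-blow-up with $f(12) = f(13) = 1$, $f(23) = 3/4$, and $\bx = (5/13,\, 4/13,\, 4/13)$; it is $K_4$-free (being tripartite) and contains no $(3/4,3)$-heavy triangle (one edge class has weight equal to, not strictly greater than, $3/4$), while attaining weight $\tfrac{8}{13} - o(1)$, so the bound is tight.
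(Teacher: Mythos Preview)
Your proposal is correct and takes essentially the same approach as the paper: the corollary is obtained simply by setting $a = 3/4$ in Theorem~\ref{thm:heavy}, and the paper does not even spell out the details you have carefully verified.
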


%
%

\begin{figure}
\begin{center}
       \begin{tikzpicture}[scale=2]
    \draw[line width=20 pt,color=gray!50!white] 
    (0,0) coordinate(a) -- (1,0) coordinate(b)
    (a)  -- (60:1)  coordinate(c)
    (c)--(b)
    ;
    \draw[fill=white]  
    (a) circle(0.3cm) (a) node{$\frac{n}{4-a}$}
    (b) circle(0.3cm) (b) node{$\frac{n}{4-a}$}
    (c) circle(0.3cm) (c) node{$\frac{(2-a)n}{4-a}$}
    ;
    \draw
    ($(a)!0.5!(b)$) node{$a$}
    ($(a)!0.5!(c)$) node{$1$}
    ($(c)!0.5!(b)$) node{$1$}
    ;
    \end{tikzpicture}
\end{center}
\caption{The extremal weighted triangle from Theorem~\ref{thm:heavy}.}\label{fig:11a}
\end{figure}
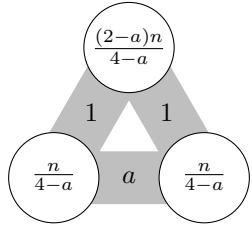

A $K_q$ is $a$-{\it chubby}, if it has an edge of weight larger than $a$. For $r \ge q,$ we determine the maximum weight of a $K_{r+1}$-free graph with no chubby $K_q.$ In particular, this determines the extremal $K_{r+1}$-free graphs with no $(a, 1)$-heavy triangles. The possible extremal graphs are the $r$-clique with all edges of weight $a$ and the $(q-1)$-clique with all edges of weight $1$.

\begin{theorem}
    Given integers $2 \le q \le r$ and $0 \le a \le 1$, let $f \colon \binom{[q]}{2}\to[0,1]$ be defined as $f(12) = a$ and $f(ij) = 0$ for all $\{i,j\} \ne \{1, 2\}.$ Denoting $\calF = \{ (q, f), K_{r+1}^0\},$ we have 
    \[ d(\calF) = \max\{ 1 - 1/(q-1), a(1 - 1/r) \}. \] 
\end{theorem}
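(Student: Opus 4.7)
My plan is to prove matching lower and upper bounds on $d(\calF)$.

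For the lower bound I will exhibit two constructions, one for each term in the maximum. First, $T(n, q-1)$ with every edge weighted $1$ is $K_q$-free (hence avoids the $(q,f)$-configuration, which requires an underlying $K_q$) and $K_{r+1}$-free since $q-1 \le r$; its weight tends to $1 - 1/(q-1)$. Second, $T(n, r)$ with every edge weighted exactly $a$ is $K_{r+1}$-free, and every $K_q$ it contains has all edges of weight exactly $a$, so no edge strictly exceeds $a$ and the $(q,f)$-configuration is avoided; its weight tends to $a(1 - 1/r)$.

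For the upper bound I invoke Lemma~\ref{lemma:main}, which applies since $K_{r+1}^0 \in \calF$. It supplies a weighted clique $(t, f')$ with $t \le r$ and positive weights $x_1, \dots, x_t$ summing to $1$ whose blowup is $\calF$-free and attains weight asymptotic to $d(\calF)$. The decisive structural fact, inherited from Lemma~\ref{lem:lagrangians}(a) inside the proof of Lemma~\ref{lemma:main}, is that $f'(ij) > 0$ for every $i \ne j$; equivalently, every $q$-subset of parts already spans a $K_q$ in the blowup with all edges positively weighted. I then split on $t$. When $t \le q-1$, no $K_q$ appears in the blowup, so the only constraint is $f'(ij) \in [0,1]$, and a Cauchy--Schwarz bound gives
\[
\sum_{i \ne j} x_i x_j f'(ij) \le 1 - \sum_i x_i^2 \le 1 - \tfrac{1}{t} \le 1 - \tfrac{1}{q-1}.
\]
When $t \ge q$, every pair $\{i,j\}$ extends to a $q$-subset of $[t]$; the induced $K_q$ in the blowup has all edges positively weighted, so to avoid the $(q, f)$-configuration no edge of it can exceed $a$, forcing $f'(ij) \le a$. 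Since this applies to every pair,
\[
\sum_{i \ne j} x_i x_j f'(ij) \le a\left(1 - \sum_i x_i^2\right) \le a\left(1 - \tfrac{1}{t}\right) \le a\left(1 - \tfrac{1}{r}\right),
\]
and combining the two cases yields the desired upper bound.

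I do not anticipate a serious obstacle, since Lemma~\ref{lemma:main} already packages the Lagrangian machinery. The only subtle point is the inference in the second case: one must use the strict positivity of \emph{every} $f'(ij)$ (guaranteed by Lemma~\ref{lem:lagrangians}(a)) to conclude that every pair, rather than merely some subset of pairs, is subject to the cap $a$. Without that positivity, one could imagine a blowup where some $f'(ij)=0$ collapses the relevant $K_q$'s and allows other edges to exceed $a$, which would break the clean dichotomy.
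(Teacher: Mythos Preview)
Your proof is correct and follows essentially the same route as the paper: apply Lemma~\ref{lemma:main}, split according to whether $t \le q-1$ or $t \ge q$, and in each case bound the weight of the optimal blowup (the paper phrases the two cases as ``maximize $w(G_n)$'' rather than separating lower and upper bounds, but the content is identical). Your explicit remark about needing $f'(ij)>0$ in the case $t\ge q$ is a point the paper leaves implicit; note, however, that Lemma~\ref{lemma:main} as stated passes to a limit and does not literally guarantee strict positivity, so to be fully rigorous you should either argue with the pre-limit data from its proof or observe that any zero-weight pair can be contracted without decreasing the objective---a minor patch that applies equally to the paper's version.
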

\begin{proof}
    Let $t, f, \bx$ be the values obtained by applying Lemma~\ref{lemma:main} to the family $\calF$ such that the $(n, t, f, \bx)$-blowup $G_n$ satisfies $w(G_n) = (1 - o(1))d(\calF).$ 
    
    If $t \ge q,$ then, since every edge of $G_n$ is in a $q$-clique, we have that $f(ij) \le a$ for all $ij \in \binom{[t]}{2}$ and $w(G_n)$ is maximized if equality holds for all $i, j$ and $t = r.$ In that case, $w(G_n) \sim a (1 - 1/r).$
    
    On the other hand, if $t < q,$ then $w(G_n)$ is maximized by taking $t = q-1$ and $f(ij) = 1$ for all $ ij \in \binom{[t]}{2},$ so in this case $w(G_n) \sim 1 - 1 / (q-1).$ Taking the maximum of the above two cases finishes the proof.
\end{proof}

Finally, we consider the problem of forbidding an $(r+1)$-clique and an $(a, 2)$-heavy triangle. Here the extremal examples are the complete bipartite graph with all edges of weight $1$ and a blowup of an $r$-clique in which the edges of weight $1$ form a matching of size $\lfloor r/2 \rfloor$ and the other edges have weight $a.$

\begin{theorem}
    Fix an integer $r$ and a real $0  \le a \le 1.$ Define  $h \colon \binom{[3]}{2}\to [0,1]$ as $h(12) = h(13) = a, h(23) = 0.$ Denoting $\calF = \{ (3, h), K_{r+1}^0\},$ we have 
    \[ d(\calF) = \begin{cases}
        \frac{1}{2}, &\text{if } a \le \frac{1}{2},\\
        a + \frac{1-2a}{r}, &\text{if } a > \frac{1}{2} \text{ and } r \text{ is even},\\
        \frac{a^2(r-1)}{a(r+1) - 1}, &\text{if } a > \frac{1}{2} \text{ and } r \text{ is odd}.
    \end{cases}
    \]
\end{theorem}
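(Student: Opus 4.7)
The plan is to apply Lemma~\ref{lemma:main} to $\calF$. Since $K_{r+1}^0 \in \calF$, this yields a weighted clique $(t,f)$ with $t \le r$ and a positive probability vector $\bx$ satisfying $\sum_{j \ne i} f(ij) x_j = d(\calF)$ for every $i \in [t]$, such that the $(n,t,f,\bx)$-blowup $G_n$ is $\calF$-free and asymptotically achieves $d(\calF)$. Because a triangle in $G_n$ uses three distinct parts $V_i, V_j, V_k$ with respective edge weights $f(ij), f(ik), f(jk)$, the $(3,h)$-freeness is equivalent to the combinatorial condition that for all pairwise distinct $i, j, k \in [t]$, if $f(ij), f(ik) > a$ then $f(jk) = 0$.

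The structural crux of the argument is the claim that at the optimum, the heavy set $M := \{ij : f(ij) > a\}$ is a matching in $[t]$, with $f = 1$ on $M$ and $f = a$ off $M$. Within matchings, no triple of indices in $[t]$ has two heavy edges at a common vertex, so the off-matching entries $f(ij)$ may be freely chosen in $[0, a]$; pointwise they are maximized by $a$, and on $M$ by $1$. If instead $M$ contained a cherry $\{v u_1, v u_2\}$, the forced zero $f(u_1 u_2) = 0$ caps the weight contribution from the triple $\{v, u_1, u_2\}$ at $2 x_v (x_{u_1} + x_{u_2}) \le \tfrac{1}{2}$, and a direct comparison with the matching alternatives (using the closed form computed below) shows such configurations are dominated; more general non-matching $M$ are handled similarly by iteratively downgrading cherries and filling the vacated forced zeros with $a$.

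Imposing the matching structure, the balanced-degree equations read $(1-a) x_{i^*} - a x_i + a = d$ for a matched pair $(i, i^*)$ and $a(1 - x_j) = d$ for an unmatched vertex $j$. Subtracting the two matched equations forces $x_i = x_{i^*}$, so all $2m$ matched vertices share a common value $y = (d-a)/(1-2a)$ and all $t-2m$ unmatched vertices share $z = 1 - d/a$ (for $a \ne 1/2$; the case $a = 1/2$ follows by continuity). Substituting into $2 m y + (t-2m) z = 1$ yields the closed form
\[
d \;=\; a + \frac{a(1-2a)}{2m(1-a) + t(2a-1)}.
\]

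The final step is to maximize this over admissible $(t, m)$ with $0 \le 2m \le t \le r$. For $a \le \tfrac{1}{2}$, the positivity conditions $y, z > 0$ force either $t = 2m$ or $m = 0$, and a direct calculation shows $d \le \tfrac{1}{2}$ with equality at $(t, m) = (2, 1)$ (the balanced complete bipartite graph). For $a > \tfrac{1}{2}$ the correction $a(1-2a)$ is negative, so one maximizes the denominator; since $a > 2a-1$ for $a < 1$, matched vertices contribute more per vertex to the denominator, leading to $t = r$ and $m = \lfloor r/2 \rfloor$. Substituting gives $d = a + (1-2a)/r$ for even $r$ (perfect matching) and $d = a^2(r-1)/(a(r+1)-1)$ for odd $r$ (with one unmatched vertex). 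The main obstacle is the reduction to matching heavy graphs, which requires a careful local exchange argument to rule out all cherry-containing configurations.
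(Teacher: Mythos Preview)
Your outline follows the paper's argument closely: apply Lemma~\ref{lemma:main}, reduce to a matching structure on the heavy edges, solve the equal-degree system, and optimize. Your closed form $d = a + \frac{a(1-2a)}{2m(1-a) + t(2a-1)}$ and the subsequent optimization over $(t,m)$ are correct and give exactly what the paper obtains by splitting into even/odd $t$.

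The gap is in your reduction to the matching structure. Your argument that a cherry $\{vu_1, vu_2\}$ ``caps the weight contribution from the triple at $2x_v(x_{u_1}+x_{u_2}) \le 1/2$'' bounds only the contribution of that single triple, not the total weight $\sum_{i\ne j} f(ij) x_i x_j$; so the ``direct comparison with the matching alternatives'' does not follow. Likewise, the ``iteratively downgrading cherries and filling the vacated forced zeros with $a$'' move is not shown to be weight-nondecreasing (and in general it is not: lowering $f(vu_2)$ from $1$ to $a$ costs $(1-a)x_v x_{u_2}$ while filling $f(u_1u_2)=a$ gains $a x_{u_1} x_{u_2}$, and there is no reason the latter dominates). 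The paper sidesteps this entirely: the optimal clique coming out of the Lagrangian argument (Lemma~\ref{lem:lagrangians}) is \emph{positive}, i.e.\ $f(ij)>0$ for all pairs; but then a cherry of heavy edges at $v$ together with $f(u_1u_2)>0$ already contains the forbidden $(3,h)$, so cherries cannot occur. Once heavy edges form a matching, every non-heavy weight can be raised to $a$ and every heavy one to $1$ without creating a forbidden configuration. Use this positivity; the local-exchange route is both harder and, as sketched, incorrect.
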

\begin{proof}
    Let $(n, t, f, \bx)$ be the values obtained by applying Lemma~\ref{lemma:main} to $\calF$ and let $G_n$ denote the $(n, t, f, \bx)$-blow-up. Note that we may assume that all edge weights $f(ij)$ are either $a$ or $1,$ as otherwise we could increase the weight while preserving the property that the $(n, t, f, \bx)$-blow-up is $\calF$-free.
    
    Since $G_n$ is $\calF$-free, the edges $ij$ with $f(ij) = 1$ form a matching. The weight of $G_n$ is clearly maximized if this matching is of largest possible size. Without loss of generality, assume that $f(2\ell-1,2\ell) = 1$ for all $1 \le \ell \le \lfloor t/2 \rfloor,$ and $f(ij) = a$ for all other pairs. By part~\ref{weighted:degrees} in Lemma~\ref{lemma:main}, it follows that $x_{2\ell-1} = x_{2\ell}$ for all $\ell \in [ \lfloor t/2 \rfloor].$
    
    First, assume that $t$ is even. Then, for all $i \in [t]$,
    \begin{equation} \label{eq:t-even}
        d(\calF) = \sum_{j \neq i} x_j f(ij) = x_i + a \cdot (1 - 2x_i) = a + (1-2a) x_i. 
    \end{equation}
    If $a = 1/2,$ then $d(\calF) = 1/2.$ Otherwise, \eqref{eq:t-even} implies that $x_1 = x_2 = \ldots = x_t = 1/t,$ so $d(\calF) = a + \frac{1-2a}{t}.$
    
    Now, assume that $t$ is odd. From part~\ref{weighted:degrees} of Lemma~\ref{lemma:main}, we obtain for all $i \in [t-1]$
    \[ 
    d(\calF) = x_i + a (1 - 2x_i) = a + (1 - 2a) x_i, 
    \]
    If $a = 1/2,$ it follows that $d(\calF) = 1/2.$ Otherwise, we obtain $x_1 = x_2 =
    \ldots = x_{t-1}$ and so $x_t = 1 - (t-1) x_1,$ which implies
    \[ d(\calF) = a (1 - x_t) = a(t-1)x_1 \le a. \]
    If $a < 1/2,$ then we have  already seen that $d(\calF) \ge 1/2,$ so assume from now on that $a \ge 1/2.$
    Combining the two equations, we obtain $x_1 = \frac{a}{a(t+1) - 1},$ which is well-defined since $a(t+1) - 1 \ge 1/2 \cdot 3 - 1 > 0.$ We conclude that in this case
    \[ d(\calF) = a(t-1)x_1 = \frac{a^2(t-1)}{a(t+1) - 1}. \]
    
    We have shown that $d(\calF) = 1/2$ if $a \le 1/2.$ What is left to verify is that if $a > 1/2,$ then $d(\calF)$ is maximized by taking $t = r.$ It is enough to show that for any $a > 1/2$ and any even $t \ge 2,$ we have
    \[ a + \frac{1-2a}{t} \le \frac{a^2t}{a(t+2) - 1} \le a + \frac{1 - 2a}{t+2}. \]
    This is a straightforward calculation so we omit the details.
\end{proof}

\section{Ramsey-Tur\'an density}~\label{sec:ramtur}
Recall the definition of Ramsey-Tur\'{a}n density:
\[
\varrho_p(q) = \lim_{\varepsilon \to 0}\lim_{n \to \infty} \frac{RT_p(n,K_q,\varepsilon n)}{\binom{n}{2}},
\]
where $RT_p(n, K_q, m)$ is the maximum number of edges in an $n$-vertex $K_q$-free graph in which every $m+1$ vertices contain a clique of size $p.$

Given that the case $p=2$ is fully resolved, we will focus on the case when $p\ge 3$. It is not hard to see that $\varrho_p(q)=0$ when $q\le p+1$. 
We present a table of what is known about $\varrho_p(q)$, when both $p$ and $q$ are small. Note that if the lower and upper bounds are not matching, it is not clear which one is closer to the truth.
\begin{center}
\begin{tabular}{c|c|c|c|c|c|c|c|c|c|c}
\diaghead(-1,1){aa}{$p$}{$q$}
&      5 &  6 &  7 &  8 & 9 & 10 & 11 & 12  & 13 & 14   \\ \hline
        3 &      H &  S &  E &  H &   &  E &  H &    &  E  & H \\
        4 &     0  & H  &  S & S & E & H  &  $\star$   &    &  E   & H \\
        5 &     0  & 0  &  S & S  & S & S  &  E  &  $\star$  &     &\\
        6 &     0  & 0  &  0 & S  & S & S  &  S  &  $\star$  &  E  & $\star$\\
\end{tabular}
\end{center}

\begin{itemize}
\item[E] means  $q \equiv 1 \pmod{p}$. It was proved in \cite{MR1300968} that the optimal construction is a complete $(q-1)/p$-partite graph, with a graph with sublinear $\alpha_p$-independence number placed into each of the classes, i.e., $\varrho_p(q) = \frac{q-1-p}{q-1}.$

\item[H] means Theorem 1.4 in~\cite{hong}: 
\end{itemize}

\[
\varrho_3(3t+2)= \frac{5t-4}{5t+1} \quad \quad \text{ and } \quad\quad  \varrho_4(4t+2) = \frac{7t-6}{7t+1}.
\]

\begin{itemize}
\item
[S] means  $q = p+\ell$, where $\ell \leq \min\{p,5\}$.  In \cite{MR1300968}, the upper bound $\varrho_p(q) \leq \frac{\ell-1}{2p}$ was proved. For lower bounds see below.
\item
[$\star$] We improve the upper bound. 
\end{itemize}

{\bf Case $p=3$:}\\
For $q=6$, a construction from~\cite{BaloghLenz2} shows
that $\varrho_3(6) \ge  1/4$. Joining this construction with independent sets (here joining means adding an independent set of vertices and all the cross-edges) iteratively (see also Figure~\ref{fig:conjfalse}) yields that 
$\varrho_3(9) \ge  4/7$ and $\varrho_3(12) \ge  7/10$.
The upper bound $\varrho_3(9) \le  3/5$ is from~\cite{MR1300968}.

%


{\bf Case $p=4$:}\\
The lower bounds were proved both in~\cite{BaloghLenz2} and~\cite{hong}. The upper bounds for $q\le 9$ are from~\cite{MR1300968}, and we prove here $\varrho_4(11) \le 4/7$: 
$$\varrho_4(6) =  1/8,\quad \varrho_4(7) =  1/4,\quad  1/4\le \varrho_4(8) =  3/8,\quad \varrho_4(11) = 4/7.
$$


{\bf Case $p=5$:}\\
The lower bounds were proved both in~\cite{hong} for $q\le 9$ and $q=12,13$, and in~\cite{BaloghLenz2} for the others. The upper bounds for $q\le 10$ are from~\cite{MR1300968},  and we settle the case  $q=12$: 
$$\varrho_5(7) = \frac{1}{10},\quad  \varrho_5(8) = \frac  {1}{5},\quad \frac {1}{4}\le \varrho_5(9) \le  \frac {3}{10},\quad \frac {1}{4}\le\varrho_5(10) \le  \frac {2}{5}, \quad  \varrho_5(12)=\frac {10}{19},\quad \frac {5}{9} \leq \varrho_5(13),\quad \frac{4}{7}\le \varrho_5(14).$$


{\bf Case $p=6$:}\\
The upper bounds are from~\cite{MR1300968} and
 the constructions are from~\cite{hong}. The cases $q=10,11,12$ are from~\cite{BaloghLenz2}.
In this paper, we prove the upper bounds for $q=12, 14$. The construction for $q=12$ is from~\cite{BaloghLenz2}, and for $q=14$ it is from~\cite{hong}: 
$$\varrho_6(8) =  \frac {1}{12},\quad  \varrho_6(9) =  \frac{1}{6},\quad \varrho_6(10) =  \frac{1}{4},\quad \frac{1}{4}\le \varrho_6(11) \le  \frac{1}{3},\quad \frac{1}{4}\le \varrho_6(12) \le  \frac{5}{12},
\quad \varrho_6(14) =  \frac{12}{23}.$$


Recall that Conjecture~\ref{conj:false} was disproved by Liu, Reiher, Sharifzadeh, and Staden~\cite{hong} by showing $\varrho_{16}(22)=1/6 >5/32=  \varrho_{16}^\star(22)$. While they resolved many  cases when  $p=3$ and $p=4$,  they also asked what happens for larger $p$. While we solve only some sporadic cases, we think that it is interesting to determine, or at least upper bound some values, to see what constructions are or could be sharp. 

Below, we recollect all the new results we prove in this paper.

\thmsummaryrestate*

We tried to apply the method from the proof of Theorem~\ref{thm:summary} on   
$\varrho_3(12)$,
$\varrho_5(13)$,
$\varrho_5(14)$,
$\varrho_4(12)$, and
$\varrho_7(16)$.
However, we would need more computational power to obtain reasonable bounds.
The rest of the paper is dedicated to proving Theorem~\ref{thm:summary}.

\subsection{$\varrho_5(12) = \frac{10}{19}$}\label{sec:512}

Corollary 1.2 in~\cite{hong} gives a construction showing 
$\varrho_5(12) \geq \frac{10}{19}$, see Figure~\ref{fig:512}. 
The existence of such a construction was suggested by Conjecture~\ref{conj:false}.
In this section we show the construction is asymptotically best possible. The results in the rest of the  section are proved in a similar way and only the differences from this section will be mentioned.

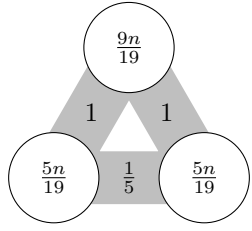
\begin{figure}
\begin{center}
       \begin{tikzpicture}[scale=2]
    \draw[line width=20 pt,color=gray!50!white] 
    (0,0) coordinate(a) -- (1,0) coordinate(b)
    (a)  -- (60:1)  coordinate(c)
    (c)--(b)
    ;
    \draw[fill=white]  
    (a) circle(0.3cm) (a) node{$\frac{5n}{19}$}
    (b) circle(0.3cm) (b) node{$\frac{5n}{19}$}
    (c) circle(0.3cm) (c) node{$\frac{9n}{19}$}
    ;
    \draw
    ($(a)!0.5!(b)$) node{$\frac{1}{5}$}
    ($(a)!0.5!(c)$) node{$1$}
    ($(c)!0.5!(b)$) node{$1$}
    ;
    \end{tikzpicture}
\end{center}
\caption{Sketch of a construction for $\varrho_5(12) \geq \frac{10}{19}$.}\label{fig:512}
\end{figure}

\begin{proof}[Proof outline.]
We only need to prove $\varrho_5(12) \leq \frac{10}{19}$.
As mentioned in Section~\ref{sec-intro-ramsey-turan},
we use the method developed by Liu, Reiher, Sharifzadeh, and Staden, see Chapter 5 in~\cite{hong}, that translates finding the upper bound of $\varrho_5(12)$ into a weighted Tur\'an-type problem using Szemer\'edi's regularity lemma~\cite{MR1300968}. Then we use flag algebras to solve the weighted problem.
We give an outline of the method, the interested reader may read Chapter 5 in~\cite{hong} for a precise explanation.

First, fix a small $\varepsilon > 0$ and let
 $G$ be a (large) graph with $\alpha_5(G) < \varepsilon n$, i.e.,   every set of at least $\varepsilon n$ vertices in $G$ spans a copy of $K_5$. We want to show $G$ has edge density at most $\frac{10}{19}+o(1)$.
 
Next, we apply the regularity lemma to $G$ and obtain a cluster graph $R$.
Recall that each vertex $u$ in $R$ corresponds to a cluster of vertices $V_u$ in $G$.
To simplify the notation, we let $V_i$ to denote $V_{v_i}$.
For vertices $u$ and $v$ in $R$,  the \emph{density} of $uv$, denoted by $w(u,v)$, is the edge density between $V_u$ and $V_v$ in $G$.
Hence, we can view $R$ as an edge-weighted graph with $w$ being the weighting.

To turn the original problem into a weighted Tur\'an problem, we first show there are some forbidden weighted cliques (see Section~\ref{sec:otherw}). To find them, we use the following embedding lemma:

\begin{lemma} \label{lem:embedding}
    Let $\varepsilon > 0$ and $t \in \mathbb{N}$. Then, there exists an $\varepsilon' > 0$ such that the following holds. Let $V_1, \dots, V_t$ be clusters in an $\varepsilon'$-regular partition of a graph $G$ with $\alpha_p(G) = o(n)$. Suppose that, for each $i < t,$ we are given a permutation $\pi_i$ of $\{i+1, \dots, t\}$ and positive integers $s_{i,j},$ for $1 \le i \le j \le t$ satisfying $s_{i,i} = p$, for every  $i \in [t]$ and $s_{i,j} = \lceil s_{i,j-1} (d(V_i, V_{\pi_i(j)}) - \varepsilon/2) - \varepsilon/2 \rceil.$ Then, $G$ contains  a clique of size $\sum_{i=1}^t s_{i,t}$. 
\end{lemma}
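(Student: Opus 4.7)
The proof will be a greedy sequential embedding using the defect (slicing) form of Szemer\'edi regularity together with the $\alpha_p$-condition. I would first choose $\varepsilon'\ll\varepsilon,1/p,1/t$ small enough that the $O(t^{2})$ applications of regularity below each lose at most a negligible fraction of the relevant vertex sets, so that every intermediate candidate set remains of size $\Omega(n)$ and both further regularity arguments and the $\alpha_p$-hypothesis continue to apply.

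For each cluster $V_i$ I would maintain a candidate subset $X_{i,j}\subseteq V_i$ whose guaranteed cardinality is tracked exactly by $s_{i,j}$. I would initialize $X_{i,i}$ as a $K_p$ inside $V_i$ produced by the $\alpha_p$-condition (valid because $|V_i|$ is linear in $n$ while $\alpha_p(G)=o(n)$), and then for $j=i+1,\dots,t$ refine $X_{i,j-1}$ to $X_{i,j}$ by keeping only those vertices adjacent to the vertices already committed inside $V_{\pi_i(j)}$. Regularity of the pair $(V_i,V_{\pi_i(j)})$ of density $d(V_i,V_{\pi_i(j)})$ implies that this refinement preserves at least a multiplicative fraction $d(V_i,V_{\pi_i(j)})-\varepsilon/2$ of $X_{i,j-1}$ up to an additive loss of $\varepsilon/2$, which is precisely the recurrence defining $s_{i,j}$; the permutation $\pi_i$ is simply the order in which these pruning steps are applied to cluster $V_i$.

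Once the refinement reaches $j=t$, the set $X_{i,t}$ has at least $s_{i,t}$ vertices that are adjacent to every vertex committed in the other clusters. Since $s_{i,t}\le p$ and $X_{i,t}$ is still of linear size, the $\alpha_p$-condition produces a $K_{s_{i,t}}$ inside $X_{i,t}$. Combining such a $K_{s_{i,t}}$ from every cluster $V_i$ yields a clique of size $\sum_{i=1}^{t}s_{i,t}$ in $G$, as required.

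The main obstacle is the cross-cluster coordination: the vertices selected inside $V_{\pi_i(j)}$, which drive the refinement of $V_i$, must be the same vertices that ultimately appear in the final clique, so the processes for different $i$ cannot be run independently. The resolution is to execute all the refinements globally, in an order compatible with every permutation $\pi_i$ simultaneously, and to exploit the fact that at every stage each surviving candidate set is of linear size so that a "typical" vertex chosen from $V_{\pi_i(j)}$ shrinks $X_{i,j-1}$ by the expected factor. With $\varepsilon'$ chosen sufficiently small, all accumulated regularity errors are absorbed by the $\varepsilon/2$ slacks in the recurrence, and the induction on $j-i$ closes.
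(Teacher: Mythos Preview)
Your high-level plan --- find a $K_p$ in each cluster, prune it down to size $s_{i,t}$, and assemble the pieces --- is the same as the paper's. But the refinement step is described backwards, and this is exactly what creates the coordination obstacle you flag at the end.

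You define $X_{i,j}$ by keeping only those vertices of $X_{i,j-1}$ adjacent to the vertices ``already committed inside $V_{\pi_i(j)}$''. Since $\pi_i(j)>i$, if clusters are processed in the natural order $1,\dots,t$ there is nothing committed in $V_{\pi_i(j)}$ at that moment, so the operation is vacuous. The paper's refinement points the other way: having found a $p$-clique $K$ in the current candidate set of $V_i$, one prunes $K=K_i\supseteq K_{i+1}\supseteq\cdots\supseteq K_t$ so that each $K_j$ has a \emph{linear} common neighbourhood in the current candidate set of $V_{\pi_i(j)}$, and this common neighbourhood then becomes the new candidate set there. This is not a regularity statement applied to the $\le p$-element set $K_{j-1}$; rather, regularity guarantees that every vertex of $K_{j-1}$ has at least a $(d-\varepsilon')$-fraction of neighbours in the linear candidate set of $V_{\pi_i(j)}$, and then an averaging/pigeonhole over the at most $\binom{s_{i,j-1}}{s_{i,j}}\le 2^p$ subsets shows that some $s_{i,j}$ of them share a linear common neighbourhood. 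That computation is precisely what the recurrence $s_{i,j}=\lceil s_{i,j-1}(d-\varepsilon/2)-\varepsilon/2\rceil$ encodes.

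Once the refinement is oriented this way, the coordination problem disappears: processing $V_1,\dots,V_t$ in order and, at stage $i$, looking \emph{ahead} to later clusters in the order dictated by $\pi_i$ already guarantees that the vertices committed in $V_i$ have linear common neighbourhood in every later cluster. No global order ``compatible with all $\pi_i$'' is needed. Finally, note that $X_{i,t}$ is a subset of a $p$-clique and hence has at most $p$ vertices, not linear size; but since it is then automatically a clique, your second appeal to the $\alpha_p$-condition is simply unnecessary.
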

\begin{proof}[Proof sketch.]
    Let $\varepsilon' \ll \varepsilon$ be chosen implicitly. Using standard cleaning arguments, we may assume that the pairs $(V_i, V_j)$ are $(\varepsilon'', d(V_i, V_j) - \varepsilon'')$-superregular for some $\varepsilon'' \ll \varepsilon$.
    Recall that a pair $(A,B)$ is \emph{$(\epsilon,\delta)$-superregular} if it is $\varepsilon$-regular, $\deg(a) \geq \delta|B|$ for all $a \in A$ and $\deg(b) \geq \delta|A|$ for all $b \in B$. 
    We will sequentially embed $s_{i,t}$ vertices into cluster $V_i$ such that the embedded vertices form a clique and their common neighbourhood has linear size in each of the subsequent clusters. Fix some $i \in [t]$ and suppose we have already embedded vertices into clusters $V_1, \ldots, V_{i-1}.$ By assumption, the embedded vertices have a linear sized common neighbourhood in $V_i$ and since $\alpha_p(G) = o(n),$ there is a $p$-clique $K$ in this set. Now, we find a subset of $K$ which has a linear sized common neighbourhood in the clusters $V_j$ for every $ j>i.$ To do this, we follow the ``recipe'' given to us by the permutation $\pi_i$ and sizes $s_{i,j}.$ Set $K_i = K.$  Suppose $i < j \le t$ and we have found $K_{j-1} \subseteq K$ such that the common neighbourhood of all vertices embedded into clusters $V_1, \dots, V_{i-1}$ and $K_{j-1}$ has linear size in each of the clusters $V_{\pi_i(i+1)}, \dots, V_{\pi_i(j-1)}.$ By superregularity, each vertex $v \in K_{j-1}$ has at least $d(V_i, V_{\pi_i(j)}) - \varepsilon''$ neighbours in $V_{\pi_i(j)}.$ Recalling the definition of $s_{i, j}$ it follows that there is a set $K_j \subseteq K_{j-1}$ whose common neighbourhood in $V_{\pi_i(j)}$ is of linear size. In the end, we have embedded $s_{i,t}$ vertices into cluster $V_i,$ yielding the desired clique.
\end{proof}

Plugging in specific values into Lemma~\ref{lem:embedding} can be somewhat opaque, so to make our proofs easier to follow, we write certain embedding rules. However, they are easily seen to be consequences of Lemma~\ref{lem:embedding}.

\begin{table}[h!]
\begin{center}
    \begin{tabular}{c|c|l}
    name/color & density interval   &   rule \\ \hline
     1    & $[0,\varepsilon)$     & no embedding \\ 
     2    & $[\varepsilon,1/5+\varepsilon)$     & any $1$ vertex \\ 
     3    & $[1/5+\varepsilon,1/2+\varepsilon)$    & some $2$ vertices \\ 
     4    & $[1/2+\varepsilon,3/5+\varepsilon)$  & any $2$ vertices or some $3$ vertices \\
     5    & $[3/5+\varepsilon,4/5+\varepsilon)$ & some $4$ vertices \\ 
     6   & $[4/5+\varepsilon,1]$  & any $5$ vertices\\ 
    \end{tabular}
\end{center}
\caption{Names, density intervals, and brief rules.}\label{tab:512}
\end{table}

In what follows, $v_i, v_j$ denote two vertices in the cluster graph and $w(v_i, v_j)$ denotes the density $d(V_i,V_j)$ in $G$.
\begin{itemize}
\item[(R0)] By assumptions on $\alpha_p(G)$,  $V_i$ contains $K_5$.

\item[(R1)] If $w(v_i,v_j) < \varepsilon$, then we cannot guarantee that vertices from both $V_i$ and $V_j$ can be used in an embedding of $K_{12}$.

\item[(R2)] If $w(v_i,v_j) \geq \varepsilon$, then arbitrary one vertex of $V_i$ can be used in an embedding of $K_{12}$.

\item[(R3)] If $w(v_i,v_j) \geq 1/5+\varepsilon$, then some two vertices of $V_i$ can be used in an embedding of $K_{12}$.

\item[(R4)] If $w(v_i,v_j) \geq 1/2+\varepsilon$, then 
 arbitrary two  vertices or some three vertices out of any $K_4 \subset K_5$ of $V_i$ can be used in an embedding of $K_{12}$.

\item[(R5)] If $w(v_i,v_j) \geq 3/5+\varepsilon$, then 
 some  $4$ vertices of the $K_5$  could be used in an embedding of $K_{12}$.

\item[(R6)]  If $w(v_i,v_j) \geq 4/5+\varepsilon$, then the $5$ vertices of the $K_5$ in $V_i$ can be used in an embedding of $K_{12}$.
\end{itemize}

A brief summary of the rules is in Table~\ref{tab:512}.
In justification for the rules, we want to show that there is a common neighborhood of linear size in  $V_j$.
If $w(v_i,v_j) = 0$, then there may be no edges at all between $V_i$ and $V_j$, so a clique on at least 2 vertices cannot intersect both $V_i$ and $V_j$. This gives (R1).
For (R2), if $w(v_i,v_j) > \varepsilon$, then any typical vertex in $V_i$ has linearly many neighbors in $V_j$.
For (R3) and (R5), first pick a $K_5$ in $V_i$ by (R0). By the pigeonhole principle, 2 and 4 vertices respectively have a common neighborhood of linear size in $V_j$. 
In (R4) and (R6), two and 5, respectively vertices must have a linear common neighborhood in $V_j$. 
For the second part of (R4), applying the pigeonhole principle gives that some three out of any 4 vertices have a common neighborhood of linear size in $V_j$.   


Notice that when considering $V_{i}$, we need to apply the rules towards all the remaining clusters. In particular, we can use a rule guaranteeing \emph{some} subset such as (R3) or (R5) only once and the remaining rules need to be valid for \emph{any} subset such as (R2) or (R6). For example, if we wish to embed 2 vertices in $V_j$, we can use (R3) to find 2 vertices $X$ that work for one of the remaining classes but for the rest, we need to use (R4) that guarantees $X$ has linear common neighborhood in all the remaining clusters.
  
\begin{figure}[h!]  
\begin{center}
    \begin{tikzpicture}[scale=2]
    \draw   
       (0,0)   node[xvtx,label=below:$v_1$](a){2}
       (120:1) node[xvtx,label=left:$v_2$](c){$5$} 
       (60:1)  node[xvtx,label=right:$v_3$](b){$5$} 
       (a) -- node[pos=0.5,right]{$\frac15+\varepsilon$} (b) 
       (a) -- node[pos=0.5,left]{$\frac12+\varepsilon$} (c) 
       (b) -- node[pos=0.5,above]{$\frac{4}{5}+\varepsilon$} (c)
       ;
    \draw (0,-0.5)node[below]{(a)};   
    \end{tikzpicture}
\hskip 1 em
    \begin{tikzpicture}[scale=2]
    \draw   
       (0,0)   node[xvtx,label=below:$v_1$](a){3}
       (60:1)  node[xvtx,label=right:$v_3$](b){$5$} 
       (120:1) node[xvtx,label=left:$v_2$](c){$4$} 
       (a) -- node[pos=0.5,right]{$\frac12+\varepsilon$} (b) 
       (a) -- node[pos=0.5,left]{$\frac{3}{5}+\varepsilon$} (c) 
       (b) -- node[pos=0.5,above]{$\frac{3}{5}+\varepsilon$} (c)
       ;
    \draw (0,-0.5)node[below]{(b)};   
    \end{tikzpicture}
\hskip 1em
    \begin{tikzpicture}[scale=2]
    \draw 
       (0,0) node[xvtx,label=below:$v_1$](1){1} 
       (1,0) node[xvtx,label=below:$v_2$](2){1} 
       (1,1) node[xvtx,label=above:$v_3$](3){$5$} 
       (0,1) node[xvtx,label=above:$v_4$](4){$5$} 
       (1) -- node[pos=0.5,below]{$\varepsilon$} (2) 
       (1) -- node[pos=0.3,left]{$\varepsilon$}  (3) 
       (1) -- node[pos=0.5,left]{$\varepsilon$}  (4)
       (2) -- node[pos=0.5,right]{$\varepsilon$} (3) 
       (2) -- node[pos=0.3,right]{$\varepsilon$}  (4) 
       (3) -- node[pos=0.5,above]{$\frac{4}{5}+\varepsilon$}  (4)
       ;
    \draw (0.5,-0.5)node[below]{(c)};   
    \end{tikzpicture} 
\\
\hskip 1em    
    \begin{tikzpicture}[scale=2]
    \draw 
       (0,0) node[xvtx,label=below:$v_1$](1){1} 
       (1,0) node[xvtx,label=below:$v_2$](2){2} 
       (1,1) node[xvtx,label=above:$v_3$](3){$4$} 
       (0,1) node[xvtx,label=above:$v_4$](4){$5$} 
       (1) -- node[pos=0.5,below]{$\varepsilon$} (2) 
       (1) -- node[pos=0.3,left]{$\varepsilon$}  (3) 
       (1) -- node[pos=0.5,left]{$\varepsilon$}  (4)
       (2) -- node[pos=0.7,right]{$\frac{1}{5}+\varepsilon$} (3) 
       (2) -- node[pos=0.3,right]{$\frac{1}{2}+\varepsilon$}  (4) 
       (3) -- node[pos=0.5,above]{$\frac{3}{5}+\varepsilon$}  (4)
       ;
    \draw (0.5,-0.5)node[below]{(d)};         
    \end{tikzpicture} 
   \hskip 1em  
    \begin{tikzpicture}[scale=2]
    \draw 
       (0,0) node[xvtx,label=below:$v_1$](1){2} 
       (1,0) node[xvtx,label=below:$v_2$](2){2} 
       (1,1) node[xvtx,label=above:$v_3$](3){$3$} 
       (0,1) node[xvtx,label=above:$v_4$](4){$5$} 
       (1) -- node[pos=0.5,below]{$\frac12+\varepsilon$} (2) 
       (1) -- node[pos=0.3,left]{$\frac{1}{5}+\varepsilon$}  (3) 
       (1) -- node[pos=0.5,left]{$\frac12+\varepsilon$}  (4)
       (2) -- node[pos=0.7,right]{$\frac{1}{5}+\varepsilon$} (3) 
       (2) -- node[pos=0.3,right]{$\frac{1}{2}+\varepsilon$}  (4) 
       (3) -- node[pos=0.5,above]{$\frac{1}{2}+\varepsilon$}  (4)
       ;
    \draw (0.5,-0.5)node[below]{(e)};         
    \end{tikzpicture}   
\end{center}
\caption{Forbidden weighted cliques $\mathcal{F}$ used in $\varrho_5(12) \leq \frac{10}{19}$.
The label on edge denotes the edge weight. The number in vertex $v_i$ indicates $k_i$, which is the number of vertices used from $V_i$ to obtain $K_{12}$. 
}\label{fig:K12}
\end{figure}
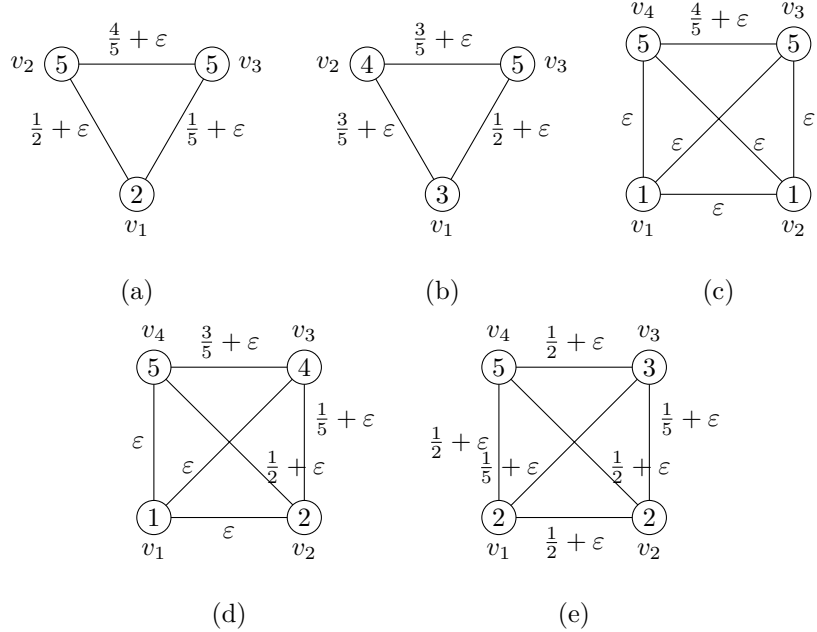

Let $\mathcal{F}$ be the weighted cliques depicted in Figure~\ref{fig:K12}.
Below we argue that if any of them appears in $R$, then we can find a $K_{12}$ in $G$. To simplify the explanation, we skip explicitly mentioning applications of (R0):\\
\begin{itemize}

\item[(a)] First we apply (R3) on $v_1v_3$ to find 2 vertices in $V_{1}$. The same two vertices work also for $v_1v_2$ by (R4). In $V_{2}$ we add a $K_5$ by (R6) on $v_2v_3$. Finally, we find another $K_5$ in $V_{3}$.

\item[(b)] Using (R5) on $v_1v_2$~we pick $4$ vertices from $V_1$ that have a linear sized common neighborhood in $V_2$. Using  (R4) on $v_1v_3$, among these $4$ vertices, we can choose $3$ that have a linear sized common neighborhood in $V_3$.
In $V_2$ we pick 4 vertices using (R5) on $v_2v_3$ and $V_3$ contributes the remaining $5$ vertices.
  
\item[(c)]  Using (R2) around $v_1$ and $v_2$, we choose one vertex from each $V_1$ and $V_2$. From each $V_3$ and $V_4$ we choose five vertices, which is possible by (R6) on $v_3v_4$.
  
\item[(d)] Using (R2) around $v_1$ we embed one vertex to $V_1$. Using (R3) on $v_2v_3$, we choose some two  vertices to $V_2$, and by (R4) the same two vertices work for  $v_2v_4$. In $V_3$ we add a $K_4$ by (R5) on $v_3v_4$, and we find $K_5$ in $V_4$.
  
\item[(e)] Using (R3) on $v_1v_3$ we choose some two vertices from $V_1$. The same two vertices work for $v_1v_2$ and $v_1v_4$ by (R4). By (R3) on $v_2v_3$, we embed some two adjacent vertices to $V_2$, and by (R4) the same two vertices work for  $v_2v_4$.
By (R4) on $v_3v_4$, we embed $3$ vertices in $V_3$. Finally, we embed a $K_5$ in $V_4$.
\end{itemize}

\medskip

Next we use flag algebras to find an upper bound of $d(\mathcal{F})$, see its definition in \eqref{eq:df}.
We simplify the problem to an edge-colored extremal problem. 
Let $H$ be a complete graph on the same vertex set as $R$.
Let $c:E(H)\to [6]$ be a $6$-edge-coloring defined according to Table~\ref{tab:512}.
Notice that forbidding $\mathcal{F}$ in $R$ gives a list of forbidden
$6$-edge-colored complete graphs $\mathcal{F}_c$.
Each configuration in Figure~\ref{fig:K12} gives several forbidden configurations in $\mathcal{F}_c$.

Let $c_i$ denote the density of edges of color $i$ in $H$. Given the above 6-edge-colored problem, flag algebras can be applied to prove an upper bound
\begin{align}
\frac{1}{5}c_2 + \frac{1}{2} c_3 + \frac{3}{5} c_4  + \frac{4}{5}c_5 + c_6 \leq \frac{10}{19}  + o(1).\label{eq:fa}
\end{align}
This implies $d(\mathcal{F}) \leq 10/19$.
Notice that when splitting densities into intervals, we used the lower bounds of the intervals to obtain forbidden configurations but we used the upper bounds to calculate the weight.
The calculation of \eqref{eq:fa} is computer assisted and it is too large to fit in the paper here. It can be downloaded at \oururl.
We use simple flag algebras approach, where we add a sum of squares to the left-hand side of \eqref{eq:fa} which yields the right-hand side. 
In addition to the bound 10/19, it would be possible to obtain a stability type result that the cluster graph is a blow-up of the graph depicted in Figure~\ref{fig:512}.
\end{proof}

\subsection{$\varrho_6(14) \leq \frac{12}{23}$}

Here $p=6$ and $q=14$, so in Conjecture~\ref{conj:false} we get $t=2, r=0$.
The three-partite construction depicted in Figure~\ref{fig:614-411}(a) gives $\frac{2p}{4p-1} = \frac{12}{23} \approx 0.5217391304$.

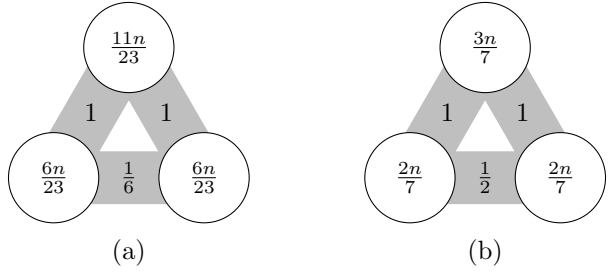
\begin{figure}
\begin{center}
       \begin{tikzpicture}[scale=2]
    \draw[line width=20 pt,color=gray!50!white] 
    (0,0) coordinate(a) -- (1,0) coordinate(b)
    (a)  -- (60:1)  coordinate(c)
    (c)--(b)
    ;
    \draw[fill=white]  
    (a) circle(0.3cm) (a) node{$\frac{6n}{23}$}
    (b) circle(0.3cm) (b) node{$\frac{6n}{23}$}
    (c) circle(0.3cm) (c) node{$\frac{11n}{23}$}
    ;
    \draw
    ($(a)!0.5!(b)$) node{$\frac{1}{6}$}
    ($(a)!0.5!(c)$) node{$1$}
    ($(c)!0.5!(b)$) node{$1$}
    ;
    \draw(0.5,-0.5) node{(a)};    
    \end{tikzpicture}
    \hskip 4em
           \begin{tikzpicture}[scale=2]
    \draw[line width=20 pt,color=gray!50!white] 
    (0,0) coordinate(a) -- (1,0) coordinate(b)
    (a)  -- (60:1)  coordinate(c)
    (c)--(b)
    ;
    \draw[fill=white]  
    (a) circle(0.3cm) (a) node{$\frac{2n}{7}$}
    (b) circle(0.3cm) (b) node{$\frac{2n}{7}$}
    (c) circle(0.3cm) (c) node{$\frac{3n}{7}$}
    ;
    \draw
    ($(a)!0.5!(b)$) node{$\frac{1}{2}$}
    ($(a)!0.5!(c)$) node{$1$}
    ($(c)!0.5!(b)$) node{$1$}
    ;
    \draw(0.5,-0.5) node{(b)};
    \end{tikzpicture}
\end{center}
\caption{Conjectured constructions  from Conjecture~\ref{conj:false} for (a) $\varrho_6(14) \geq \frac{12}{23}$ and (b) $\varrho_4(11) \geq \frac{4}{7}$.}\label{fig:614-411}
\end{figure}

The proof of the upper bound in this section is analogous to the proof in Section~\ref{sec:512}.
The main difference is that we use a different discretization of density intervals as in the table below, and different forbidden weighted cliques depicted in Figure~\ref{fig:K14}.

\begin{center}
    \begin{tabular}{c|c|l}
    name/color & weight interval   &  rule \\ \hline
     1    & $[0,\varepsilon)$     & no embedding \\ 
     2    & $[\varepsilon,1/6+\varepsilon)$   & any 1 vertex \\ 
     3    & $[1/6+\varepsilon,2/6+\varepsilon)$   & some 2 vertices \\ 
     4    & $[2/6+\varepsilon,3/6+\varepsilon)$   & some 3 vertices \\ 
     5    & $[3/6+\varepsilon,4/6+\varepsilon)$   & any 2 vertices or some 4 vertices \\ 
     6    & $[4/6+\varepsilon,5/6+\varepsilon)$   & some 5 vertices \\ 
     7    & $[5/6+\varepsilon,1]$   & any 6 vertices \\ 
    \end{tabular}
\end{center}

The proof is finished by using flag algebra calculation on $4$ vertices. The computation is computer assisted and it can be downloaded at \oururl.

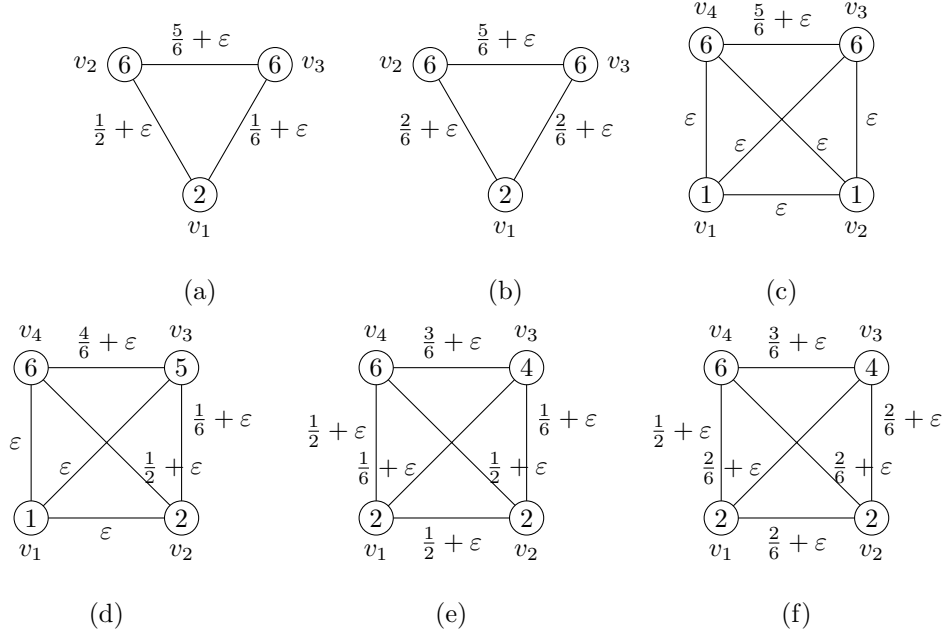
\begin{figure}
\begin{center}
    \begin{tikzpicture}[scale=2]
    \draw   
       (0,0)   node[xvtx,label=below:$v_1$](a){2}
       (120:1) node[xvtx,label=left:$v_2$](c){$6$} 
       (60:1)  node[xvtx,label=right:$v_3$](b){$6$} 
       (a) -- node[pos=0.5,right]{$\frac16+\varepsilon$} (b) 
       (a) -- node[pos=0.5,left]{$\frac12+\varepsilon$} (c) 
       (b) -- node[pos=0.5,above]{$\frac{5}{6}+\varepsilon$} (c)
       ;
    \draw (0,-0.5)node[below]{(a)};      
    \end{tikzpicture}
\hskip 1 em
    \begin{tikzpicture}[scale=2]
    \draw   
       (0,0)   node[xvtx,label=below:$v_1$](a){2}
       (120:1) node[xvtx,label=left:$v_2$](c){$6$} 
       (60:1)  node[xvtx,label=right:$v_3$](b){$6$} 
       (a) -- node[pos=0.5,right]{$\frac26+\varepsilon$} (b) 
       (a) -- node[pos=0.5,left]{$\frac26+\varepsilon$} (c) 
       (b) -- node[pos=0.5,above]{$\frac{5}{6}+\varepsilon$} (c)
       ;
    \draw (0,-0.5)node[below]{(b)};             
    \end{tikzpicture}
\hskip 1em
    \begin{tikzpicture}[scale=2]
    \draw 
       (0,0) node[xvtx,label=below:$v_1$](1){1} 
       (1,0) node[xvtx,label=below:$v_2$](2){1} 
       (1,1) node[xvtx,label=above:$v_3$](3){$6$} 
       (0,1) node[xvtx,label=above:$v_4$](4){$6$} 
       (1) -- node[pos=0.5,below]{$\varepsilon$} (2) 
       (1) -- node[pos=0.3,left]{$\varepsilon$}  (3) 
       (1) -- node[pos=0.5,left]{$\varepsilon$}  (4)
       (2) -- node[pos=0.5,right]{$\varepsilon$} (3) 
       (2) -- node[pos=0.3,right]{$\varepsilon$}  (4) 
       (3) -- node[pos=0.5,above]{$\frac{5}{6}+\varepsilon$}  (4)
       ;
    \draw (0.5,-0.5)node[below]{(c)};          
    \end{tikzpicture} 
  
    \begin{tikzpicture}[scale=2]
    \draw 
       (0,0) node[xvtx,label=below:$v_1$](1){1} 
       (1,0) node[xvtx,label=below:$v_2$](2){2} 
       (1,1) node[xvtx,label=above:$v_3$](3){$5$} 
       (0,1) node[xvtx,label=above:$v_4$](4){$6$} 
       (1) -- node[pos=0.5,below]{$\varepsilon$} (2) 
       (1) -- node[pos=0.3,left]{$\varepsilon$}  (3) 
       (1) -- node[pos=0.5,left]{$\varepsilon$}  (4)
       (2) -- node[pos=0.7,right]{$\frac{1}{6}+\varepsilon$} (3) 
       (2) -- node[pos=0.3,right]{$\frac{1}{2}+\varepsilon$}  (4) 
       (3) -- node[pos=0.5,above]{$\frac{4}{6}+\varepsilon$}  (4)
       ;
    \draw (0.5,-0.5)node[below]{(d)};                 
    \end{tikzpicture} 
   \hskip 1em  
    \begin{tikzpicture}[scale=2]
    \draw 
       (0,0) node[xvtx,label=below:$v_1$](1){2} 
       (1,0) node[xvtx,label=below:$v_2$](2){2} 
       (1,1) node[xvtx,label=above:$v_3$](3){$4$} 
       (0,1) node[xvtx,label=above:$v_4$](4){$6$} 
       (1) -- node[pos=0.5,below]{$\frac12+\varepsilon$} (2) 
       (1) -- node[pos=0.3,left]{$\frac{1}{6}+\varepsilon$}  (3) 
       (1) -- node[pos=0.6,left]{$\frac12+\varepsilon$}  (4)
       (2) -- node[pos=0.7,right]{$\frac{1}{6}+\varepsilon$} (3) 
       (2) -- node[pos=0.3,right]{$\frac{1}{2}+\varepsilon$}  (4) 
       (3) -- node[pos=0.5,above]{$\frac{3}{6}+\varepsilon$}  (4)
       ;
    \draw (0.5,-0.5)node[below]{(e)};                        
    \end{tikzpicture}  
       \hskip 1em  
    \begin{tikzpicture}[scale=2]
    \draw 
       (0,0) node[xvtx,label=below:$v_1$](1){2} 
       (1,0) node[xvtx,label=below:$v_2$](2){2} 
       (1,1) node[xvtx,label=above:$v_3$](3){$4$} 
       (0,1) node[xvtx,label=above:$v_4$](4){$6$} 
       (1) -- node[pos=0.5,below]{$\frac26+\varepsilon$} (2) 
       (1) -- node[pos=0.3,left]{$\frac{2}{6}+\varepsilon$}  (3) 
       (1) -- node[pos=0.6,left]{$\frac12+\varepsilon$}  (4)
       (2) -- node[pos=0.7,right]{$\frac{2}{6}+\varepsilon$} (3) 
       (2) -- node[pos=0.3,right]{$\frac{2}{6}+\varepsilon$}  (4) 
       (3) -- node[pos=0.5,above]{$\frac{3}{6}+\varepsilon$}  (4)
       ;
    \draw (0.5,-0.5)node[below]{(f)};                               
    \end{tikzpicture}  
\end{center}
\caption{Forbidden configurations used in $\varrho_6(14) \leq \frac{4}{7}$.}\label{fig:K14}
\end{figure}

\subsection{$\varrho_4(11) \leq \frac{4}{7}$}

Here $p=4$ and $q=11$, so in Conjecture~\ref{conj:false} we get $t=2, r=1$.
The three-partite construction depicted in Figure~\ref{fig:614-411}(b) gives $\frac47$.

The proof of the upper bound in this section is analogous to the proof in Section~\ref{sec:512}.
The main difference is that we use a different discretization of density intervals as in the table below, and different forbidden weighted cliques depicted in Figure~\ref{fig:K11}.

\begin{center}
    \begin{tabular}{c|c|l}
    name/color & density interval   &   rule \\ \hline
     1    & $[0,\varepsilon)$     & no embedding \\ 
     2    & $[\varepsilon,1/4+\varepsilon)$     & any $1$ vertex \\ 
     3    & $[1/4+\varepsilon,1/2+\varepsilon)$    & some $2$ vertices \\ 
     4    & $[1/2+\varepsilon,3/4+\varepsilon)$  & any $2$ vertices or some 3 vertices\\
     5    & $[3/4+\varepsilon,1]$ & any $4$ vertices\\ 
    \end{tabular}
\end{center}

The proof is finished by using flag algebra calculation on $5$ vertices. The computation is computer assisted and it can be downloaded at \oururl.

\begin{figure}[h!]
\begin{center}
    \begin{tikzpicture}[scale=2]
    \draw 
       (0,0) node[xvtx,label=below:$v_1$](a){4} (60:1) node[xvtx,label=right:$v_3$](b){4} 
       (a) (120:1) node[xvtx,label=left:$v_2$](c){3} 
       (a) -- node[pos=0.5,right]{$\frac34+\varepsilon$} (b) 
       (a) -- node[pos=0.5,left]{$\frac34+\varepsilon$} (c) 
       (b) -- node[pos=0.5,above]{$\frac12+\varepsilon$} (c)
       ;
           \draw (0,-0.5)node[below]{(a)};          
    \end{tikzpicture}
\hskip 0.5em
    \begin{tikzpicture}[scale=2]
    \draw 
       (0,0) node[xvtx,label=below:$v_1$](1){1} 
       (1,0) node[xvtx,label=below:$v_2$](2){2} 
       (1,1) node[xvtx,label=above:$v_3$](3){4} 
       (0,1) node[xvtx,label=above:$v_4$](4){4} 
       (1) -- node[pos=0.5,below]{$\varepsilon$} (2) 
       (1) -- node[pos=0.3,left]{$\varepsilon$}  (3) 
       (1) -- node[pos=0.5,left]{$\varepsilon$}  (4)
       (2) -- node[pos=0.5,right]{$\frac14+\varepsilon$} (3) 
       (2) -- node[pos=0.3,right]{$\frac12+\varepsilon$}  (4) 
       (3) -- node[pos=0.5,above]{$\frac34+\varepsilon$}  (4)
       ;
           \draw (0.5,-0.5)node[below]{(b)};         
    \end{tikzpicture}   
\hskip 0.5em
    \begin{tikzpicture}[scale=2]
    \draw 
       (0,0) node[xvtx,label=below:$v_1$](1){2} 
       (1,0) node[xvtx,label=below:$v_2$](2){2} 
       (1,1) node[xvtx,label=above:$v_3$](3){3} 
       (0,1) node[xvtx,label=above:$v_4$](4){4} 
       (1) -- node[pos=0.5,below]{$\frac14+\varepsilon$} (2) 
       (1) -- node[pos=0.3,left]{$\frac12+\varepsilon$}  (3) 
       (1) -- node[pos=0.5,left]{$\frac12+\varepsilon$}  (4)
       (2) -- node[pos=0.5,right]{$\frac14+\varepsilon$} (3) 
       (2) -- node[pos=0.3,right]{$\frac12+\varepsilon$}  (4) 
       (3) -- node[pos=0.5,above]{$\frac34+\varepsilon$}  (4)
       ;
           \draw (0.5,-0.5)node[below]{(c)};         
    \end{tikzpicture}   
    \hskip 0.5em
    \begin{tikzpicture}[scale=1.8]
    \draw
       (90:1) node[xvtx,label=above:$v_1$](1){1} 
       (90+72:1) node[xvtx,label=left:$v_2$](2){1} 
       (90+2*72:1) node[xvtx,label=below:$v_3$](3){1} 
       (90+3*72:1) node[xvtx,label=below:$v_4$](4){4} 
       (90+4*72:1) node[xvtx,label=right:$v_5$](5){4} 
       (1) -- node[pos=0.5,below]{$\varepsilon$} (2) 
       (1) -- node[pos=0.2,left]{$\varepsilon$}  (3) 
       (1) -- node[pos=0.2,left]{$\varepsilon$}  (4)
       (1) -- node[pos=0.5,left]{$\varepsilon$}  (5)
       (2) -- node[pos=0.5,left]{$\varepsilon$} (3) 
       (2) -- node[pos=0.2,below]{$\varepsilon$}  (4) 
       (2) -- node[pos=0.2,below]{$\varepsilon$}  (5) 
       (3) -- node[pos=0.5,below]{$\varepsilon$}  (4)
       (3) -- node[pos=0.5,above]{$\varepsilon$}  (5)
       (4) -- node[pos=0.5,right]{$\frac34+\varepsilon$}  (5)
       ;
           \draw (0,-1.2)node[below]{(d)};                
    \end{tikzpicture}   
    \hskip 0.5em
    \begin{tikzpicture}[scale=1.8]
    \draw
       (90:1) node[xvtx,label=above:$v_1$](1){1} 
       (90+72:1) node[xvtx,label=left:$v_2$](2){1} 
       (90+2*72:1) node[xvtx,label=below:$v_3$](3){2} 
       (90+3*72:1) node[xvtx,label=below:$v_4$](4){3} 
       (90+4*72:1) node[xvtx,label=right:$v_5$](5){4} 
       (1) -- node[pos=0.5,below]{$\varepsilon$} (2) 
       (1) -- node[pos=0.2,left]{$\varepsilon$}  (3) 
       (1) -- node[pos=0.2,left]{$\varepsilon$}  (4)
       (1) -- node[pos=0.5,left]{$\varepsilon$}  (5)
       (2) -- node[pos=0.5,left]{$\varepsilon$} (3) 
       (2) -- node[pos=0.2,below]{$\varepsilon$}  (4) 
       (2) -- node[pos=0.2,below]{$\varepsilon$}  (5) 
       (3) -- node[pos=0.5,below]{$\frac14+\varepsilon$}  (4)
       (3) -- node[pos=0.5,above]{$\frac12+\varepsilon$}  (5)
       (4) -- node[pos=0.5,right]{$\frac12+\varepsilon$}  (5)
       ;
           \draw (0,-1.2)node[below]{(e)};                
    \end{tikzpicture}
\end{center}
\caption{Forbidden configurations used in $\varrho_4(11) \leq \frac{4}{7}$.}\label{fig:K11}
\end{figure}
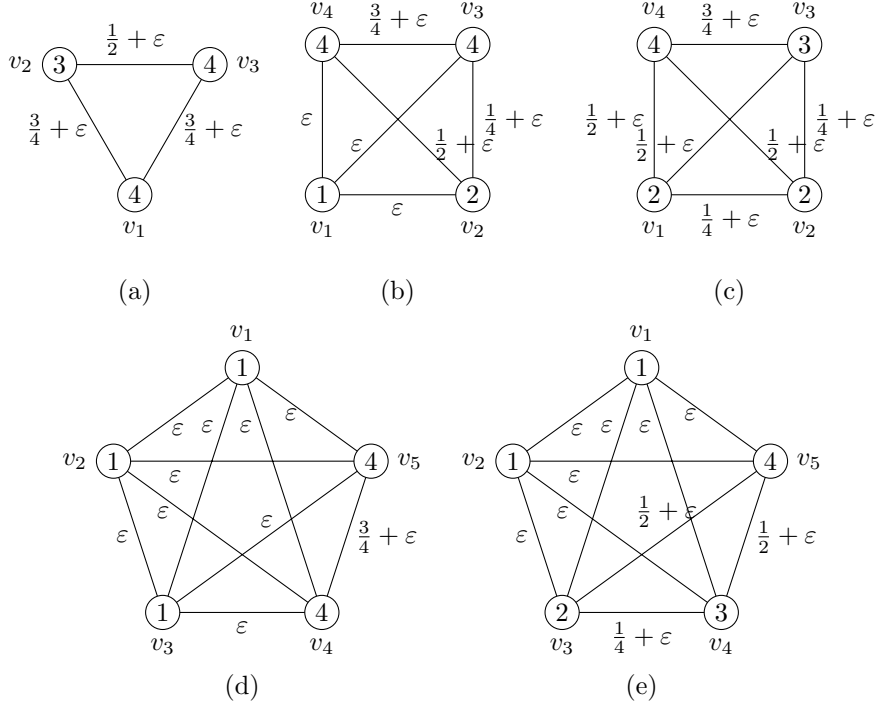



\subsection{Upper bounds for $\varrho_{p}(p+6)$}

In this section we show
\[
\varrho_{p}(p+6) \leq  \begin{cases}
   \frac{5}{2p}  & \text{ if } p \leq 12 \\
    \frac{8}{3p} & \text{ otherwise. }
    \end{cases}
\]
We mentioned earlier that  $\varrho_3(9)\le 3/5$, which is better than our bound.
Note that for $p=4$, \cite{hong} proved $\varrho_4(10) = \frac{8}{15}$, which is also better than our bound. For $p = 5$, Erd\H{o}s~\cite{MR1300968} has $\varrho_5(11)= \frac{5}{10}$, which is the same as our bound.
The case $p \geq 13$ is implied by Theorem 1.5 from~\cite{hong} by setting $s=4$  and $t=3$. We reprove it here using our method.
In the following we assume $p \geq 6$.

The two conjectured extremal constructions are 
a balanced bipartite graph with edge density  $\frac{5}{p}$ between the two parts giving
edge density $\frac{5}{2p}$,
and a balanced tripartite graph with edge density $\frac{4}{p}$ between every two parts, giving
edge density $\frac{8}{3p}$.
Notice that these are not necessarily realizable since one has to fill in the parts graphs, where every independent set of $\alpha n$ vertices contain $K_{p}$ but there is no $K_{p+6}$ in the resulting graph. It is not obvious how to make such a construction.

\begin{center}
    \begin{tikzpicture}[scale=2]
    \draw[line width=30 pt,color=gray!50!white] 
    (0,0) -- (1,0)
    ;
    \draw[fill=white]
    (0,0) ellipse(0.2 cm and 0.4 cm)
    (1,0) ellipse(0.2 cm and 0.4 cm)
    ;
    \draw
    (0,0) node{$\frac{n}{2}$}
    (1,0) node{$\frac{n}{2}$}
    (0.5,0) node{$\frac{5}{p}$}
    ;
    \end{tikzpicture}
\hskip 3em    
       \begin{tikzpicture}[scale=2]
    \draw[line width=20 pt,color=gray!50!white] 
    (0,0) coordinate(a) -- (1,0) coordinate(b)
    (a)  -- (60:1)  coordinate(c)
    (c)--(b)
    ;
    \foreach \x in {a,b,c}
    {
    \draw[fill=white]
    (\x) circle(0.3cm)
    (\x) node{$\frac{n}{3}$}
    ;
    }
    \draw
    ($(a)!0.5!(b)$) node{$\frac{4}{p}$}
    ($(a)!0.5!(c)$) node{$\frac{4}{p}$}
    ($(c)!0.5!(b)$) node{$\frac{4}{p}$}
    ;
    \end{tikzpicture}
\end{center}

Notice that the tripartite construction has a higher edge density. 
Below, We rule out the tripartite construction for $p \leq 12$ while Liu, Reiher, Sharifzadeh, and Staden~\cite{hong} actually constructed it for $p=16$, and commented that their method should work for some other values of $p$. 

Now we focus on the upper bound.
First observe that there is no edge in the cluster graph with weight at least $5/p+\varepsilon$.
If there was one, we could find $6 + p$ vertices forming a clique, which is a contradiction.

The rest of proof of the upper bound is again analogous to the proof in Section~\ref{sec:512}.
We discretize the densities according to the following table. Notice that the table works for all $p$ since we have an upper bound of $5/p+\varepsilon$ on the densities.
\begin{center}
    \begin{tabular}{c|c|l}
    name/color & weight interval   &  rule \\ \hline
     1    & $[0,\varepsilon)$   & no embedding  \\ 
     2    & $[\varepsilon,1/p+\varepsilon)$   & any 1 vertex \\ 
     3    & $[1/p+\varepsilon,2/p+\varepsilon)$   & some 2 vertices \\ 
     4    & $[2/p+\varepsilon,3/p+\varepsilon)$   & some 3 vertices  \\
     5    & $[3/p+\varepsilon,4/p+\varepsilon)$   & some 4 vertices \\ 
     6    & $[4/p+\varepsilon, 5/p+\varepsilon)$   & some 5 vertices \\ 
    \end{tabular}
\end{center}

The forbidden configurations used for showing $\varrho_p(p+6) \leq \frac{8}{3p}$ are depicted in Figure~\ref{fig:83}.

\begin{figure}[h!]
\begin{center}
    \begin{tikzpicture}[scale=2]
    \draw 
       (0,0) node[xvtx,label=below:$v_1$](a){1} (60:1) node[xvtx,label=right:$v_3$](b){$p$} 
       (a) (120:1) node[xvtx,label=left:$v_2$](c){5} 
       (a) -- node[pos=0.5,right]{$\varepsilon$} (b) 
       (a) -- node[pos=0.5,left]{$\varepsilon$} (c) 
       (b) -- node[pos=0.5,above]{$\frac4p+\varepsilon$} (c)
       ;
           \draw (0,-0.5)node[below]{(a)};          
    \end{tikzpicture}
\hskip 0.5em
    \begin{tikzpicture}[scale=2]
    \draw 
       (0,0) node[xvtx,label=below:$v_1$](1){1} 
       (1,0) node[xvtx,label=below:$v_2$](2){1} 
       (1,1) node[xvtx,label=above:$v_3$](3){4} 
       (0,1) node[xvtx,label=above:$v_4$](4){$p$} 
       (1) -- node[pos=0.5,below]{$\varepsilon$} (2) 
       (1) -- node[pos=0.3,left]{$\varepsilon$}  (3) 
       (1) -- node[pos=0.5,left]{$\varepsilon$}  (4)
       (2) -- node[pos=0.5,right]{$\varepsilon$} (3) 
       (2) -- node[pos=0.3,right]{$\varepsilon$}  (4) 
       (3) -- node[pos=0.5,above]{$\frac3p+\varepsilon$}  (4)
       ;
           \draw (0.5,-0.5)node[below]{(b)};         
    \end{tikzpicture}   
\hskip 0.5em
    \begin{tikzpicture}[scale=1.8]
    \draw
       (90:1) node[xvtx,label=above:$v_1$](1){1} 
       (90+72:1) node[xvtx,label=left:$v_2$](2){1} 
       (90+2*72:1) node[xvtx,label=below:$v_3$](3){1} 
       (90+3*72:1) node[xvtx,label=below:$v_4$](4){3} 
       (90+4*72:1) node[xvtx,label=right:$v_5$](5){$p$} 
       (1) -- node[pos=0.5,below]{$\varepsilon$} (2) 
       (1) -- node[pos=0.2,left]{$\varepsilon$}  (3) 
       (1) -- node[pos=0.2,left]{$\varepsilon$}  (4)
       (1) -- node[pos=0.5,left]{$\varepsilon$}  (5)
       (2) -- node[pos=0.5,left]{$\varepsilon$} (3) 
       (2) -- node[pos=0.2,below]{$\varepsilon$}  (4) 
       (2) -- node[pos=0.2,below]{$\varepsilon$}  (5) 
       (3) -- node[pos=0.5,below]{$\varepsilon$}  (4)
       (3) -- node[pos=0.5,above]{$\varepsilon$}  (5)
       (4) -- node[pos=0.5,right]{$\frac2p+\varepsilon$}  (5)
       ;
           \draw (0,-1.2)node[below]{(c)};                
    \end{tikzpicture}   
\end{center}
\caption{Forbidden configurations used in $\varrho_p(p+6) \leq \frac{8}{3p}$.}\label{fig:83}
\end{figure}
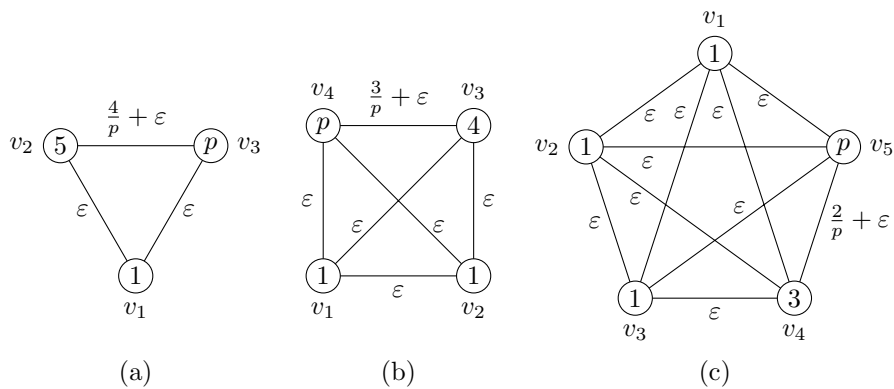

\begin{figure}
\begin{center}
    \begin{tikzpicture}[scale=2]
    \draw 
       (0,0) node[xvtx,label=below:$v_1$](a){2} (60:1) node[xvtx,label=right:$v_3$](b){$p$} 
       (a) (120:1) node[xvtx,label=left:$v_2$](c){4} 
       (a) -- node[pos=0.5,right]{$\frac3p+\varepsilon$} (b) 
       (a) -- node[pos=0.5,left]{$\frac3p+\varepsilon$} (c) 
       (b) -- node[pos=0.5,above]{$\frac3p+\varepsilon$} (c)
       ;
    \end{tikzpicture}
\end{center}
\caption{An extra forbidden configuration in addition to Figure~\ref{fig:83} used to show $\varrho_p(p+6) \leq \frac{5}{2p}$.}\label{fig:52}
\end{figure}
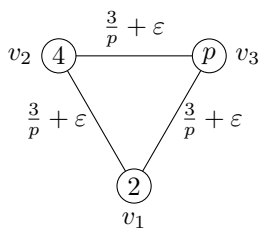

For $p \leq 12$, we can add one more forbidden configuration to the list that results in an improved upper bound, see Figure~\ref{fig:52}.
The only trick is with $v_1$. First, we pick $4$ vertices that have linear common neighborhood in $V_3$. From these 4 vertices, we can pick 2 by pigeonhole principle that have linear common neighborhood in $V_2$. These are the two vertices we keep.
The second step works only if $ 4 \cdot (\frac{3}{p} + \varepsilon) > 1$, which gives the restriction $p \leq 12$. This provides bound $\varrho_p(p+6) \leq \frac{5}{2p}$.
The computation is computer assisted and it can be downloaded at \oururl.

\section*{Acknowledgment}
The authors are grateful for Ce Chen for reading carefully the manuscript. The second author thanks Benny Sudakov for suggesting working on proving Theorem~\ref{thm:standard_weighting}, who heard the problem from the first author at a conference in Oberwolfach.
This work used the computing resources at the Center for Computational Mathematics, University of Colorado Denver, including the Alderaan cluster, supported by the National Science Foundation award OAC-2019089.

\bibliographystyle{plainurl}
\bibliography{references}

\end{document}